\documentclass[11pt]{amsart}
\usepackage[english]{babel}
\usepackage[T1]{fontenc}
\usepackage[latin1]{inputenc}
\usepackage{graphicx}
\usepackage{amsmath,amssymb,amsthm,mathrsfs,txfonts,ifthen,xr}
\usepackage{soul}
\usepackage{array, multirow}
\usepackage{stmaryrd}
\usepackage{color}
\usepackage{hyperref}
\usepackage{enumerate}
\usepackage[all]{xy}
\usepackage{tikz-cd}
\usetikzlibrary{decorations.pathreplacing}

\usepackage{ytableau}

\theoremstyle{plain}
\newtheorem{theo}{Theorem}[section]
\newtheorem{lemma}[theo]{Lemma}
\newtheorem{proposition}[theo]{Proposition}
\newtheorem{corollary}[theo]{Corollary}

\theoremstyle{definition}
\newtheorem{definition}[theo]{Definition}

\newtheorem{notation}[theo]{Notation} 

\newtheorem{example}[theo]{Example}

\theoremstyle{remark}
\newtheorem{remark}[theo]{Remark}

\def\A{{\rm A}}
\def\B{{\rm B}}
\def\C{{\rm C}}
\def\D{{\rm D}}
\def\d{{\rm d}}

\def\J{{\rm J}}
\def\K{{\rm K}}

\def\O{{\rm O}}
\def\P{{\rm P}}

\def\T{{\rm T}}
\def\U{{\rm U}}

\def\W{{\rm W}}
\def\X{{\rm X}}
\def\Y{{\rm Y}}
\def\Z{{\rm Z}}

\def\tr{{\rm tr}}

\def\Sp{{\rm Sp}}
\def\GL{{\rm GL}}	
\def\Id{{\rm Id}}

\def\Im{{\rm Im}}

\def\trd{{\rm trd}}

\def\Mat{{\rm Mat}}
\def\Bar{{\rm Bar}}

\def\trd{{\rm trd}}
\def\log{{\rm log}}

\def\Spec{{\rm Spec}}
\def\diag{{\rm diag}}

\def\Span{{\rm Span}}

\allowdisplaybreaks
\title{Correspondence of Kubo-Ando Means over Real Division Algebras and Linearization of Means}
\author{Jose Franco}
\address{Department of Mathematics and Statistics\\ University of North Florida \\ 1 UNF Drive \\ Jacksonville \\ FL 32224 \\ USA}
\email{jose.franco@unf.edu}
\author{Allan Merino}
\address{Department of Mathematics and Statistics\\ University of North Florida \\ 1 UNF Drive \\ Jacksonville \\ FL 32224 \\ USA}
\email{allan.merino@unf.edu}

\keywords{Kubo-Ando means, Division algebras, Positive Hermitian Matrices, Linearization of Means}

\subjclass[2010]{Primary: 15A42; Secondary: 47A64, 15B48.}

\date{}

\begin{document}

\begin{abstract}

In this paper, we establish a bijection between Kubo-Ando operator means defined on the cones $\mathscr{P}_{n}(\mathbb{H})$, $\mathscr{P}_{2n}(\mathbb{C})$, and $\mathscr{P}_{4n}(\mathbb{R})$. This correspondence is induced by the canonical embeddings relating quaternionic, complex, and real positive definite matrices. We investigate several structural and geometric properties preserved by these bijections, including compatibility with functional calculus, invariance under congruence transformations, and behavior with respect to natural metrics on these cones.

\noindent As an application, we prove that every Kubo-Ando mean on
$\mathscr{P}_{2}(\mathbb{D})$, where $\mathbb{D}\in\left\{\mathbb{R},\mathbb{C},\mathbb{H}\right\}$,
admits an explicit affine expression in terms of the matrices involved. Using the embeddings above, we derive explicit formulas for operator means on special classes of real $4\times4$ positive definite matrices arising as images of the cones $\mathscr{P}_{2}(\mathbb{C})$ and $\mathscr{P}_{2}(\mathbb{H})$. In particular, we obtain trace-determinant formulas for the geometric mean in the real, complex, and quaternionic settings.

\end{abstract}

\maketitle

\tableofcontents

\section{Introduction}

The theory of positive definite matrices plays a central role in matrix analysis, operator theory, differential geometry, and mathematical physics. Among the many structures defined on the cone of positive definite matrices, operator means introduced by Kubo and Ando \cite{KUBOANDO} occupy a distinguished position. Their theory provides a general framework for binary operations on positive matrices that are compatible with the Loewner order and congruence transformations. Classical examples include the arithmetic, harmonic, and geometric means. One of the fundamental results of Kubo-Ando theory states that such means are in one-to-one correspondence with operator monotone functions on $\left(0\,, +\infty\right)$.

\noindent Most of the classical literature focuses on positive definite matrices over the real or complex numbers. However, many geometric and spectral properties extend naturally to the quaternionic setting. In particular, the cone $\mathscr{P}_{n}(\mathbb{H})$ of positive quaternionic Hermitian matrices shares many similarities with the real and complex cones, including spectral decomposition, functional calculus, and geometric structures associated with matrix means. Nevertheless, operator means over $\mathbb{H}$ have received comparatively little attention in the literature.

\noindent The main purpose of this paper is to establish a correspondence between Kubo-Ando means defined over the three finite-dimensional real division algebras $\mathbb{R}, \mathbb{C},$ and $\mathbb{H}$. Our approach is based on the classical embeddings
\begin{equation*}
\Psi_{1}: \Mat_{n}(\mathbb{C}) \hookrightarrow \Mat_{2n}(\mathbb{R})\,, \qquad \Psi_{2}: \Mat_{n}(\mathbb{H}) \hookrightarrow \Mat_{2n}(\mathbb{C})\,,
\end{equation*}
which preserve algebraic operations, adjoints, positivity, and functional calculus. More precisely, the images of $\mathscr{P}_{n}(\mathbb{C})$ and $\mathscr{P}_{n}(\mathbb{H})$ identify with subcones of $\mathscr{P}_{2n}(\mathbb{R})$ and $\mathscr{P}_{2n}(\mathbb{C})$ characterized by compatibility with natural complex and quaternionic structures\,.

\noindent Using the compatibility of these embeddings with functional calculus, we prove that every Kubo-Ando mean on $\mathscr{P}_{n}(\mathbb{C})$ uniquely extends to a Kubo-Ando mean on $\mathscr{P}_{2n}(\mathbb{R})$, and similarly every Kubo-Ando mean on $\mathscr{P}_{n}(\mathbb{H})$ uniquely extends to one on $\mathscr{P}_{2n}(\mathbb{C})$. This yields one-to-one correspondences between Kubo-Ando means over the three division algebras. We then study several structures preserved by these correspondences, including the Loewner order, functional calculus, Log-Euclidean metrics, and Log-Euclidean barycenters\,.

\medskip

\noindent As an application of the correspondence between Kubo-Ando means on $\mathscr{P}_{n}(\mathbb{H})$, $\mathscr{P}_{2n}(\mathbb{C})$, and $\mathscr{P}_{4n}(\mathbb{R})$, we investigate explicit formulas for operator means in low dimensions. In the case of $\mathscr{P}_{2}(\mathbb{D})$, the Cayley-Hamilton theorem implies that every Kubo-Ando mean admits an explicit affine expression in terms of the two matrices involved. More precisely, if $\sigma$ is a Kubo-Ando mean with representing function $f$, then
\begin{equation*}
\A \sigma \B = \alpha_{f}(\X)\B+\beta_{f}(\X)\A\,, \qquad \X = \A^{-\frac{1}{2}}\B\A^{-\frac{1}{2}}\,,
\end{equation*}
where the coefficients $\alpha_{f}(\X)$ and $\beta_{f}(\X)$ depend only on the eigenvalues of $\X$ and the function $f$ itself, and hence only on the trace and determinant of $\X$ (an explicit formula can be found in Proposition \ref{DecompositionMeanAlphaBeta})\,.

\noindent Using the canonical embeddings relating quaternionic, complex, and real positive definite matrices, we transfer these formulas to special classes of real $4\times4$ positive definite matrices. In particular, we prove that every Kubo-Ando mean on the embedded cone $\Psi_{1}(\mathscr{P}_{2}(\mathbb{C}))$ and $\Psi_{2}(\mathscr{P}_{2}(\mathbb{H}))$ admits an explicit affine expression of the form
\begin{equation*}
\X \sigma \Y = \alpha_{f}(\T)\Y+\beta_{f}(\T)\X\,, \qquad \T = \X^{-\frac{1}{2}}\Y\X^{-\frac{1}{2}}\,,
\end{equation*}
where the coefficients are determined explicitly by the spectrum of $\T$. In the geometric mean case, this yields trace-determinant formulas for real $4\times4$ matrices in the image of $\mathscr{P}_{2}(\mathbb{C})$, recovering a result of \cite{CHOIKIMLIM}. We further show that such formulas are specific to the embedded subcone $\Psi_{1}(\mathscr{P}_{2}(\mathbb{C}))$ and do not extend to arbitrary elements of $\mathscr{P}_{4}(\mathbb{R})$. Finally, using the reduced trace and the Moore determinant, we obtain quaternionic analogues of these formulas for matrices in $\mathscr{P}_{2}(\mathbb{H})$.

\medskip

\noindent The paper is organized as follows. In Section~2, we recall basic facts on positive definite matrices over $\mathbb{R}$, $\mathbb{C}$, and $\mathbb{H}$, together with the embeddings $\Psi_{1}$ and $\Psi_{2}$. Section~3 contains the correspondence theorems for Kubo-Ando means. In Section~4, we study metric properties of the embeddings and show that they preserve the Log-Euclidean geometry. Section~5 is devoted to the study of Kubo-Ando means on $\mathscr{P}_{2}(\mathbb{D})$ and the compatibility with our maps $\Psi_{1}$ and $\Psi_{2}$. Finally, in Section~6, we discuss approximation results related to the embedded complex structures inside $\mathscr{P}_{2n}(\mathbb{R})$\,.

\section{Preliminaries}

Let $\mathbb{D} \in \left\{\mathbb{R}\,, \mathbb{C}\,, \mathbb{H}\right\}$, and let $\iota: \mathbb{D} \to \mathbb{D}$ be the map given by
\begin{equation*}
\iota(\X) = \overline{\X}\,, \qquad \left(\X \in \mathbb{D}\right)\,.
\end{equation*}
For all $n \in \mathbb{N}^{*}$, we denote by $\Mat_{n}(\mathbb{D})$ the set of $n$ by $n$ matrices with entries in $\mathbb{D}$, and let $\GL_{n}(\mathbb{D})$ be the set of invertible matrices in $\Mat_{n}(\mathbb{D})$\,.

\begin{notation}

For all $\A = \left(a_{i, j}\right)_{1 \leq i, j \leq n} \in \Mat_{n}(\mathbb{D})$, we denote by $\A^{*}$ the matrix of $\Mat_{n}(\mathbb{D})$ given by
\begin{equation*}
a^{*}_{i, j} = \iota(a_{j, i})\,, \qquad \left(1 \leq i\,, j \leq n\right)\,.
\end{equation*}

\end{notation}

\begin{definition}

Let $\A \in \Mat_{n}(\mathbb{D})$. We say that
\begin{itemize}
\item $\A$ is Hermitian if $\A = \A^{*}$\,,
\item $\A$ is positive (and write $\A > 0$) if $x^{*}\A x > 0$ for all non-zero $x \in \mathbb{D}^{n}$\,.
\item $\A$ is positive semidefinite (and write $\A \geq 0$) if $x^{*}\A x \geq 0$ for all non-zero $x \in \mathbb{D}^{n}$\,.
\end{itemize}

\end{definition}

\noindent We denote by $\mathfrak{p}_{n}(\mathbb{D})$ the subset of $\Mat_{n}(\mathbb{D})$ consisting of Hermitian matrices, and by $\mathscr{P}^{0}_{n}(\mathbb{D})$ and $\mathscr{P}_{n}(\mathbb{D})$ the subsets of $\mathfrak{p}_{n}(\mathbb{D})$ given by
\begin{equation*}
\mathscr{P}^{0}_{n}(\mathbb{D}) = \left\{\X \in \mathfrak{p}_{n}(\mathbb{D})\,, \X \geq 0\right\}\,, \qquad \mathscr{P}_{n}(\mathbb{D}) = \left\{\X \in \mathfrak{p}_{n}(\mathbb{D})\,, \X > 0\right\}\,.
\end{equation*}

\noindent On the set $\mathscr{P}_{n}(\mathbb{D})$, we denote by $\preceq$ the Loewner order, that is a partial order defined by
\begin{equation*}
\A \preceq \B \qquad \Leftrightarrow \qquad \B - \A \in \mathscr{P}^{0}_{n}(\mathbb{D})\,.
\end{equation*}

\noindent We denote by $\exp_{\mathbb{D}}: \Mat_{n}(\mathbb{D}) \to \GL_{n}(\mathbb{D})$ the map given by
\begin{equation*}
\exp_{\mathbb{D}}(\X) = \sum\limits_{k = 0}^{\infty} \frac{\X^{k}}{k!}\,, \qquad \left(\X \in \Mat_{n}(\mathbb{D})\right)\,.
\end{equation*}

\noindent It is well-known (see \cite{BHATIA} for $\mathbb{D} \in \left\{\mathbb{R}\,, \mathbb{C}\right\}$ and \cite[Appendix~A]{FRANCOMERINO} for $\mathbb{D} = \mathbb{H}$) that for all $\X \in \mathfrak{p}_{n}(\mathbb{D})$, we have $\exp_{\mathbb{D}}(\X) \in \mathscr{P}_{n}(\mathbb{D})$ and the corresponding map
\begin{equation*}
\exp_{\mathbb{D}}: \mathfrak{p}_{n}(\mathbb{D}) \to \mathscr{P}_{n}(\mathbb{D})
\end{equation*}
is bijective. We denote by $\log_{\mathbb{D}}$ the inverse of $\exp_{\mathbb{D}}$, i.e. 
\begin{equation*}
\exp_{\mathbb{D}}\left(\log_{\mathbb{D}}(\X)\right) = \X\,, \qquad \log_{\mathbb{D}}\left(\exp_{\mathbb{D}}(\Y)\right) = \Y\,, \qquad \left(\X \in \mathscr{P}_{n}(\mathbb{D})\,, \Y \in \mathfrak{p}_{n}(\mathbb{D})\right)\,.
\end{equation*}

\begin{definition}

For all $\X \in \mathscr{P}_{n}(\mathbb{D})$ and $t \in \mathbb{R}$, we denote by $\X^{t}_{\mathbb{D}}$ the matrix in $\mathscr{P}_{n}(\mathbb{D})$ given by
\begin{equation*}
\X^{t}_{\mathbb{D}} := \exp_{\mathbb{D}}\left(t\log_{\mathbb{D}}(\X)\right)\,.
\end{equation*}

\end{definition}

\noindent Let $\Psi_{1}: \mathbb{C} \to \Mat_{2}(\mathbb{R})$ be the map given by
\begin{equation*}
\Psi_{1}(a + ib) = \begin{pmatrix} a & b \\ -b & a \end{pmatrix}\,, \qquad \left(a\,, b \in \mathbb{R}\right)\,.
\end{equation*}
The map $\Psi_{1}$ is a monomorphism of algebras such that
\begin{equation*}
\Psi_{1}(\overline{z}) = \Psi_{1}(z)^{t}\,, \qquad \left(z \in \mathbb{C}\right)\,.
\end{equation*}
Using that $\Mat_{n}(\mathbb{C}) = \Mat_{n}(\mathbb{R}) \oplus i\Mat_{n}(\mathbb{R})$, the map $\Psi_{1}$ can be extended to an injective morphism
\begin{equation*}
\Psi_{1}: \Mat_{n}(\mathbb{C}) \to \Mat_{2n}(\mathbb{R})
\end{equation*}
defined by
\begin{equation*}
\Psi_{1}(\A + i\B) = \begin{pmatrix} \A & \B \\ -\B & \A \end{pmatrix}\,, \qquad \left(\A\,, \B \in \Mat_{n}(\mathbb{R})\right)\,.
\end{equation*}
and such that
\begin{equation}
\Psi_{1}(\X^{*}) = \Psi_{1}(\X)^{t}\,, \qquad \left(\X \in \Mat_{n}(\mathbb{C})\right)\,.
\label{AdjointPsiOne}
\end{equation}

\begin{notation}

We denote by $\K_{2n}$ the matrix in $\Mat_{2n}(\mathbb{D})$ given by
\begin{equation*}
\K_{2n} = \begin{pmatrix} 0 & \Id_{n} \\ -\Id_{n} & 0 \end{pmatrix}\,.
\end{equation*}

\end{notation}

\begin{proposition}

We have $\Psi_{1}\left(\mathscr{P}_{n}(\mathbb{C})\right) \subseteq \mathscr{P}_{2n}(\mathbb{R})$. More precisely, we have
\begin{equation*}
\Psi_{1}\left(\mathscr{P}_{n}(\mathbb{C})\right) = \left\{\X \in \mathscr{P}_{2n}(\mathbb{R})\,, \K_{2n}\X = \X\K_{2n}\right\}\,.
\end{equation*}
\label{PropositionCtoR}

\end{proposition}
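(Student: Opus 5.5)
The plan is to prove the two inclusions separately, after first showing that $\Psi_{1}$ intertwines the quadratic forms. The key identity to establish is the following. For $z = x + iy \in \mathbb{C}^{n}$ with $x, y \in \mathbb{R}^{n}$, and $\X = \A + i\B \in \mathfrak{p}_{n}(\mathbb{C})$, put $w = \begin{pmatrix} x \\ y \end{pmatrix} \in \mathbb{R}^{2n}$. Then
\begin{equation*}
w^{t}\Psi_{1}(\X)w = \mathrm{Re}\left(z^{*}\X z\right) = z^{*}\X z\,.
\end{equation*}
This is a direct expansion. Hermitianity of $\X$ means $\A^{t} = \A$ and $\B^{t} = -\B$. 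Hence $y^{t}\B x - x^{t}\B y = -2x^{t}\B y$, and the cross terms on both sides match. Note that the sign convention of $\Psi_{1}$ may force the identification $z = x - iy$ instead; I will fix whichever sign makes the computation work. Since $z \mapsto w$ is a real-linear bijection $\mathbb{C}^{n} \to \mathbb{R}^{2n}$, this gives $\X > 0$ if and only if $\Psi_{1}(\X) > 0$. Together with \eqref{AdjointPsiOne}, which shows $\Psi_{1}(\X)$ is symmetric whenever $\X$ is Hermitian, this yields $\Psi_{1}(\mathscr{P}_{n}(\mathbb{C})) \subseteq \mathscr{P}_{2n}(\mathbb{R})$.

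Next, the commutation with $\K_{2n}$. Observe that $\K_{2n} = \Psi_{1}(i\Id_{n})$. Since $\Psi_{1}$ is an algebra morphism and $i\Id_{n}$ is central in $\Mat_{n}(\mathbb{C})$, every element of the image commutes with $\K_{2n}$. This proves the inclusion "$\subseteq$".

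For the reverse inclusion, write $\X \in \mathscr{P}_{2n}(\mathbb{R})$ in $n \times n$ blocks as
\begin{equation*}
\X = \begin{pmatrix} \A & \B \\ \C & \D \end{pmatrix}\,.
\end{equation*}
Comparing the products $\K_{2n}\X$ and $\X\K_{2n}$ block by block gives $\D = \A$ and $\C = -\B$, so $\X = \Psi_{1}(\A + i\B)$. Symmetry of $\X$ then gives $\A^{t} = \A$ and $\B^{t} = -\B$, which by \eqref{AdjointPsiOne} and injectivity means $\A + i\B$ is Hermitian. Finally, the quadratic form identity transfers positivity back, so $\A + i\B \in \mathscr{P}_{n}(\mathbb{C})$.

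The only genuine care needed is the sign bookkeeping in the quadratic form identity, that is, choosing the correct real-linear identification of $\mathbb{C}^{n}$ with $\mathbb{R}^{2n}$. The remaining steps are routine block computations.
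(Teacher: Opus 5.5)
Your proof is correct and follows the paper's argument. The sign that makes your identity work is $z = x - iy$, which is exactly the paper's test vector $\begin{pmatrix} u \\ -v \end{pmatrix}$ for $z = u + iv$, giving $w^{t}\Psi_{1}(\X)w = z^{*}\X z$; your observation $\K_{2n} = \Psi_{1}(i\Id_{n})$ and your block computation for the reverse inclusion simply write out the steps the paper dismisses as easy to see.
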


\begin{proof}

Let $\X \in \mathscr{P}_{n}(\mathbb{C})$. Using that $\X = \X^{*}$, we get from $(1)$ that $\Psi_{1}(\X)^{t} = \Psi_{1}(\X^{*}) = \Psi_{1}(\X)$, i.e. $\Psi_{1}(\X) \in \mathfrak{p}_{2n}(\mathbb{R})$. It remains to show that $\Psi_{1}(\X)$ is positive. Let $u\,, v \in \mathbb{R}^{n}$, not both zero, and set $z=u+iv \in \mathbb{C}^{n}$. Since $\X$ is Hermitian, we have $\A^{t} = \A$ and $\B^{t} = -\B$. Then
\begin{eqnarray*}
\begin{pmatrix} u \\ -v \end{pmatrix}^{t} \Psi_{1}(\X) \begin{pmatrix} u \\ -v \end{pmatrix} & = & \begin{pmatrix} u^{t} & -v^{t} \end{pmatrix} \begin{pmatrix} \A & \B \\ -\B & \A \end{pmatrix} \begin{pmatrix} u \\ -v \end{pmatrix} = \begin{pmatrix} u^{t} & -v^{t} \end{pmatrix} \begin{pmatrix} \A u-\B v \\ -\B u-\A v \end{pmatrix} \\
& = & u^{t}\A u-u^{t}\B v+v^{t}\B u+v^{t}\A v = u^{t}\A u-u^{t}\B v-u^{t}\B v+v^{t}\A v \\
& = & u^{t}\A u+v^{t}\A v-2u^{t}\B v\,.
\end{eqnarray*}
On the other hand,
\begin{eqnarray*}
z^{*}\X z & = & \left(u^{t}-iv^{t}\right)\left(\A+i\B\right)\left(u+iv\right) = \left(u^{t}-iv^{t}\right)\left(\A u+i\A v+i\B u-\B v\right) \\
& =& u^{t}\A u+iu^{t}\A v+iu^{t}\B u-u^{t}\B v - iv^{t}\A u+v^{t}\A v+v^{t}\B u+iv^{t}\B v\,.
\end{eqnarray*}
Since $\A^{t}=\A$ and $\B^{t}=-\B$, we have
\begin{equation*}
u^{t}\A v = v^{t}\A u\,, \qquad u^{t}\B u=v^{t}\B v = 0\,, \qquad v^{t}\B u = -u^{t}\B v\,.
\end{equation*}
Therefore
\begin{equation*}
z^{*}\X z = u^{t}\A u+v^{t}\A v-2u^{t}\B v\,.
\end{equation*}
Hence
\begin{equation*}
\begin{pmatrix} u \\ -v \end{pmatrix}^{t} \Psi_{1}(\X) \begin{pmatrix} u \\ -v \end{pmatrix} = z^{*}\X z > 0\,,
\end{equation*}
i.e. $\Psi_{1}(\X) \in \mathscr{P}_{2n}(\mathbb{R})$. Using that 
\begin{equation*}
\Im(\Psi_{1}) \subseteq \left\{\begin{pmatrix} \A & \B \\ -\B & \A \end{pmatrix}\,, \A\,, \B \in \Mat_{n}(\mathbb{R})\right\} = \left\{\X \in \Mat_{2n}(\mathbb{R})\,, \X\K_{2n} = \K_{2n}\X\right\}\,,
\end{equation*}
it follows from the previous computations that 
\begin{equation*}
\Psi_{1}\left(\mathscr{P}_{n}(\mathbb{C})\right) \subseteq \left\{\X \in \mathscr{P}_{2n}(\mathbb{R})\,, \K_{2n}\X = \X\K_{2n}\right\}\,.
\end{equation*}
Moreover, it is easy to see that if $\X = \begin{pmatrix} \A & \B \\ -\B & \A \end{pmatrix}$ is a matrix in $\mathscr{P}_{2n}(\mathbb{R})$, then the corresponding matrix $\A + i\B$ is in $\mathscr{P}_{n}(\mathbb{C})$, and the result follows\,.

\end{proof}

\begin{remark}

Using a similar proof, one can easily see that $\Psi_{1}\left(\mathscr{P}^{0}_{n}(\mathbb{C})\right) \subseteq \mathscr{P}^{0}_{2n}(\mathbb{R})$\,.

\end{remark}

\noindent We now give an analogue for $\mathbb{H}$. Every quaternionic number $q \in \mathbb{H}$ can be written as
\begin{equation*}
q = a + bi + cj + dk\,,
\end{equation*}
with $a\,, b\,, c\,, d \in \mathbb{R}$. In particular, using that $k = ij$, we have
\begin{equation*}
q = (a + bi) + (c + di)j\,,
\end{equation*}
i.e. 
\begin{equation}
\mathbb{H} = \mathbb{C} \oplus \mathbb{C}j\,.
\label{EquationQuaternions}
\end{equation}
In particular, it follows from Equation \eqref{EquationQuaternions} that  \begin{equation*}
\Mat_{n}(\mathbb{H}) = \Mat_{n}(\mathbb{C}) \oplus \Mat_{n}(\mathbb{C})j\,.
\end{equation*}

\noindent We denote by $\Psi_{2}: \mathbb{H} \to \Mat_{2}(\mathbb{C})$ the map given by
\begin{equation*}
\Psi_{2}(z_{1} + z_{2}j) = \begin{pmatrix} z_{1} & z_{2} \\ -\overline{z_{2}} & \overline{z_{1}} \end{pmatrix}\,, \qquad \left(z_{1}\,, z_{2} \in \mathbb{C}\right)\,.
\end{equation*}
The map $\Psi_{2}$ is a monomorphism of algebras such that
\begin{equation*}
\Psi_{2}(\overline{q}) = \Psi_{2}(q)^{*}\,, \qquad \left(q \in \mathbb{H}\right)\,.
\end{equation*}
The map $\Psi_{2}$ can be extended to $\Mat_{n}(\mathbb{H})$ by
\begin{equation*}
\Psi_{2}: \Mat_{n}(\mathbb{H}) \mapsto \Mat_{2n}(\mathbb{C})\,, \quad \Z_{1} + \Z_{2}j \mapsto \begin{pmatrix} \Z_{1} & \Z_{2} \\ -\overline{\Z_{2}} & \overline{\Z_{1}} \end{pmatrix}\,, \quad \left(\Z_{1}\,, \Z_{2} \in \Mat_{n}(\mathbb{C})\right)\,,
\end{equation*}
and such that
\begin{equation*}
\Psi_{2}(\Z^{\mathbb{*}}) = \Psi_{2}(\Z)^{*}\,, \qquad \left(\Z \in \Mat_{n}(\mathbb{H})\right)\,.
\end{equation*}

\begin{proposition}

We have $\Psi_{2}\left(\mathscr{P}_{n}(\mathbb{H})\right) \subseteq \mathscr{P}_{2n}(\mathbb{C})$. More precisely, we get
\begin{equation*}
\Psi_{2}\left(\mathscr{P}_{n}(\mathbb{H})\right) = \left\{\X \in \mathscr{P}_{2n}(\mathbb{C})\,, \K_{2n}\overline{\X} = \X\K_{2n}\right\}\,.
\end{equation*}

\label{PropositionHtoC}

\end{proposition}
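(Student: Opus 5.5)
I will follow the same pattern as the proof of Proposition \ref{PropositionCtoR}. The argument splits into three steps: Hermitian symmetry of $\Psi_{2}(\Z)$, positivity of $\Psi_{2}(\Z)$, and identification of the image with the commutation condition $\K_{2n}\overline{\X} = \X\K_{2n}$ together with the converse inclusion.

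\emph{Step 1 (Hermitian).} Let $\Z = \Z_{1} + \Z_{2}j \in \mathscr{P}_{n}(\mathbb{H})$. From $\Z = \Z^{*}$ and the identity $\Psi_{2}(\Z^{*}) = \Psi_{2}(\Z)^{*}$, we get $\Psi_{2}(\Z) \in \mathfrak{p}_{2n}(\mathbb{C})$. Since $\Psi_{2}$ is injective and compatible with adjoints, this also forces $\Z_{1}^{*} = \Z_{1}$ and $\Z_{2}^{t} = -\Z_{2}$. This can be checked directly, using $j z = \overline{z} j$ for $z \in \mathbb{C}$.

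\emph{Step 2 (positivity).} Rather than expanding $w^{*}\Psi_{2}(\Z)w$ by hand, I will use the module structure. For $x = x_{1} + x_{2}j \in \mathbb{H}^{n}$ with $x_{1}, x_{2} \in \mathbb{C}^{n}$, viewed as an $n\times 1$ quaternionic matrix, $\Psi_{2}$ extends to rectangular matrices. Because $\Psi_{2}$ is multiplicative and compatible with adjoints, we have
\begin{equation*}
\Psi_{2}(x^{*}\Z x) = \Psi_{2}(x)^{*}\Psi_{2}(\Z)\Psi_{2}(x)\,.
\end{equation*}
Here $\Psi_{2}(x) = \begin{pmatrix} x_{1} & x_{2} \\ -\overline{x_{2}} & \overline{x_{1}} \end{pmatrix}$ is a $2n\times 2$ complex matrix, and $x^{*}\Z x = r > 0$ is real, so $\Psi_{2}(r) = r\Id_{2}$. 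Reading off the $(1,1)$ entry gives $w^{*}\Psi_{2}(\Z)w = x^{*}\Z x$, where $w = \begin{pmatrix} x_{1} \\ -\overline{x_{2}} \end{pmatrix}$. The map $x \mapsto w$ is a bijection between $\mathbb{H}^{n}$ and $\mathbb{C}^{2n}$, so every nonzero $w$ arises from a nonzero $x$. Hence $\Psi_{2}(\Z) > 0$. The main obstacle is keeping the noncommutative bookkeeping honest, in particular checking that $\Psi_{2}$ is multiplicative on rectangular blocks. If needed, I will verify this directly by expanding $(x_{1}^{*} - j x_{2}^{*})\Z(x_{1} + x_{2}j)$ with the rule $jz = \overline{z}j$, mirroring the explicit computation in Proposition \ref{PropositionCtoR}.

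\emph{Step 3 (image).} I will show that
\begin{equation*}
\left\{\begin{pmatrix} \Z_{1} & \Z_{2} \\ -\overline{\Z_{2}} & \overline{\Z_{1}} \end{pmatrix}\right\} = \left\{\X \in \Mat_{2n}(\mathbb{C})\,, \K_{2n}\overline{\X} = \X\K_{2n}\right\}.
\end{equation*}
Writing $\X = \begin{pmatrix} \A & \B \\ \C & \D \end{pmatrix}$, one computes $\K_{2n}\overline{\X} = \begin{pmatrix} \overline{\C} & \overline{\D} \\ -\overline{\A} & -\overline{\B} \end{pmatrix}$ and $\X\K_{2n} = \begin{pmatrix} -\B & \A \\ -\D & \C \end{pmatrix}$. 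Equality of the two gives $\D = \overline{\A}$ and $\C = -\overline{\B}$, which is exactly the form of $\Im(\Psi_{2})$. This proves the inclusion $\subseteq$. For the converse, take $\X \in \mathscr{P}_{2n}(\mathbb{C})$ satisfying the commutation condition. Then $\X = \Psi_{2}(\Z)$ for a unique $\Z \in \Mat_{n}(\mathbb{H})$. From $\Psi_{2}(\Z^{*}) = \X^{*} = \X$ and injectivity, $\Z$ is Hermitian. By the identity of Step 2, $x^{*}\Z x = w^{*}\X w > 0$ for every nonzero $x$, so $\Z \in \mathscr{P}_{n}(\mathbb{H})$.
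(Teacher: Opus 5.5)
Your proof is correct and complete. The paper gives no argument for this statement and only cites \cite[Appendix~A]{FRANCOMERINO}, so yours is a self-contained substitute. It is the quaternionic analogue of the paper's proof of Proposition~\ref{PropositionCtoR}. Your test vector $w = \begin{pmatrix} x_{1} \\ -\overline{x_{2}} \end{pmatrix}$ is the first column of $\Psi_{2}(x)$, just as the paper's vector $\begin{pmatrix} u \\ -v \end{pmatrix}$ is the first column of $\Psi_{1}(u+iv)$.

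There are two differences from that template. First, you derive $w^{*}\Psi_{2}(\Z)w = x^{*}\Z x$ structurally from $\Psi_{2}(x^{*}\Z x) = \Psi_{2}(x)^{*}\Psi_{2}(\Z)\Psi_{2}(x)$, rather than by expanding the quadratic form. This sidesteps most of the noncommutative bookkeeping. Second, you actually prove the reverse inclusion, which the paper dismisses as ``easy to see'' in the complex case.

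The step you flagged as the main obstacle is harmless. The entrywise rule $jz = \overline{z}j$ gives $(\Z_{1}+\Z_{2}j)(\W_{1}+\W_{2}j) = (\Z_{1}\W_{1}-\Z_{2}\overline{\W_{2}}) + (\Z_{1}\W_{2}+\Z_{2}\overline{\W_{1}})j$ for matrices of any compatible sizes. Hence $\Psi_{2}$ is multiplicative on rectangular matrices, and the same entrywise check shows it is adjoint-compatible there too.

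You also correctly establish that $\Z$ is Hermitian before using the identity in the converse. This ordering matters: the $(1,1)$ entry of $\Psi_{2}(x^{*}\Z x)$ equals $x^{*}\Z x$ only when that quaternion is real.
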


\begin{proof}

See \cite[Appendix~A]{FRANCOMERINO}\,.

\end{proof}

\noindent In particular, it follows from Propositions \ref{PropositionCtoR} and \ref{PropositionHtoC} that for all $n\,, m \in \mathbb{N}^{*}$, we have
\begin{equation*}
\mathscr{P}_{n}(\mathbb{H}) \hookrightarrow \mathscr{P}_{2n}(\mathbb{C})\,, \qquad \mathscr{P}_{m}(\mathbb{C}) \hookrightarrow \mathscr{P}_{2m}(\mathbb{R})\,,
\end{equation*}
i.e.
\begin{equation*}
\mathscr{P}_{n}(\mathbb{H}) \hookrightarrow \mathscr{P}_{2n}(\mathbb{C}) \hookrightarrow \mathscr{P}_{4n}(\mathbb{R})\,.
\end{equation*}

\begin{notation}

We denote by $\widetilde{\mathscr{P}}_{n}(\mathbb{H})$ and $\widetilde{\mathscr{P}}_{m}(\mathbb{C})$ the subsets of $\mathscr{P}_{2n}(\mathbb{C})$ and $\mathscr{P}_{2m}(\mathbb{R})$ respectively given by
\begin{equation*}
\widetilde{\mathscr{P}}_{n}(\mathbb{H}) = \Psi_{2}\left(\mathscr{P}_{n}(\mathbb{H})\right)\,, \qquad \widetilde{\mathscr{P}}_{m}(\mathbb{C}) = \Psi_{1}\left(\mathscr{P}_{m}(\mathbb{C})\right)\,.
\end{equation*}

\end{notation}

\begin{remark}

We denote by $\langle\cdot\,, \cdot\rangle$ the positive Hermitian form on $\mathbb{D}^{n}$ given by
\begin{equation*}
\langle x\,, y\rangle = \sum\limits_{k = 1}^{n} x_{k}\iota(y_{k})\,.
\end{equation*}
For all $\A \in \Mat_{n}(\mathbb{D})$, we have
\begin{equation*}
\langle \A u\,, v\rangle = \langle u\,, \A^{*} v\rangle\,, \qquad \left(u\,, v \in \mathbb{D}^{n}\right)\,,
\end{equation*}
i.e. $\A^{*}$ is the adjoint of $\A$ with respect to the form $\langle\cdot\,, \cdot\rangle$\,.

\noindent We denote by $\U_{\mathbb{D}}$ the subgroup of $\GL_{n}(\mathbb{D})$ given by
\begin{equation*}
\U_{\mathbb{D}} := \left\{g \in \GL_{n}(\mathbb{D})\,, \langle g(x)\,, g(y)\rangle = \langle x\,, y\rangle\,, x\,, y \in \mathbb{D}^{n}\right\}\,.
\end{equation*}
In particular, using the notations of \cite{KNAPP}, we have
\begin{equation*}
\U_{\mathbb{D}} = \begin{cases} \O(n) & \text{ if } \mathbb{D} = \mathbb{R} \\ \U(n) & \text{ if } \mathbb{D} = \mathbb{C} \\ \Sp(n) & \text{ if } \mathbb{D} = \mathbb{H} \end{cases}
\end{equation*}
It is well-known (see \cite{BHATIA} for $\mathbb{D} \in \left\{\mathbb{R}\,, \mathbb{C}\right\}$ and \cite[Appendix~A]{FRANCOMERINO} for $\mathbb{D} = \mathbb{H}$) that for a matrix $\X \in \mathscr{P}_{n}(\mathbb{D})$, we have $\Spec(\X) \subseteq \left(0\,, +\infty\right)$, and there exists a matrix $\U \in \U_{\mathbb{D}}$ and a diagonal matrix $\Lambda = \diag\left(\lambda_{1}\,, \ldots\,, \lambda_{n}\right) \in \Mat_{n}(\mathbb{R})$, with $\lambda_{i} > 0$, such that 
\begin{equation}
\X = \U\Lambda\U^{*}\,.
\label{DecompositionX}
\end{equation}

\begin{remark}

In the decomposition \eqref{DecompositionX} for $\X \in \mathscr{P}_{n}(\mathbb{D})$, the diagonal matrix $\Lambda \in \Mat_{n}(\mathbb{R})$, and the diagonal entries of $\lambda$ are precisely the eigenvalues of $\X$. Indeed, for a positive Hermitian matrix $\X$, we have $\Spec(\X) \subseteq \left(0\,, \infty\right)$. In the case $\mathbb{D} = \mathbb{H}$, we use right-eigenvalues, i.e. $\lambda \in \Spec(\X)$ if $\A v = v\lambda$ for some non-zero vector $v \in \mathbb{D}^{n}$. As explained in \cite{ZHANG}, if $\X \in \mathscr{P}_{n}(\mathbb{H})$, then $\X$ has $n$ right eigenvalues in $\mathbb{R}$\,.

\end{remark}

\noindent If $f: \left(0\,, +\infty\right) \to \left(0\,, +\infty\right)$ is a Borel function, then $f$ can be extended to a function 
\begin{equation*}
f: \mathscr{P}_{n}(\mathbb{D}) \mapsto \mathscr{P}_{n}(\mathbb{D})
\end{equation*}
given by
\begin{equation*}
f(\X = \U\Lambda\U^{*}) = \U f(\Lambda)\U^{*}\,,
\end{equation*}
with $f\left(\Lambda = \diag\left(\lambda_{1}\,, \ldots\,, \lambda_{n}\right)\right) = \diag\left(f(\lambda_{1})\,, \ldots\,, f(\lambda_{n})\right)$\,.

\label{RemarkSpectral}

\end{remark}

\noindent We finish this section with a lemma\,.

\begin{lemma}

Let $f: \left(0\,, +\infty\right) \to \left(0\,, +\infty\right)$ be a Borel function. Then 
\begin{equation*}
f(\Psi_{1}(\A)) = \Psi_{1}(f(\A))\,, \quad f(\Psi_{2}(\B)) = \Psi_{2}(f(\B))\,, \quad \left(\A \in \mathscr{P}_{n}(\mathbb{C})\,, \B \in \mathscr{P}_{n}(\mathbb{H})\right)\,.
\end{equation*}

\label{LemmaFunctionalCalculus}

\end{lemma}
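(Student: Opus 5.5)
The plan is to pass the spectral decomposition of Remark \ref{RemarkSpectral} through the embeddings, using only that $\Psi_{1}$ and $\Psi_{2}$ are injective algebra morphisms compatible with adjoints. I treat $\Psi_{1}$ in detail; $\Psi_{2}$ is handled the same way.

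\textbf{Case of $\Psi_{1}$.} Let $\A \in \mathscr{P}_{n}(\mathbb{C})$ and write $\A = \U\Lambda\U^{*}$ with $\U \in \U(n)$ and $\Lambda = \diag(\lambda_{1}\,, \ldots\,, \lambda_{n})$, $\lambda_{i} > 0$.
\begin{itemize}
\item Applying $\Psi_{1}$, which is multiplicative, gives $\Psi_{1}(\A) = \Psi_{1}(\U)\Psi_{1}(\Lambda)\Psi_{1}(\U^{*})$.
\item By \eqref{AdjointPsiOne}, $\Psi_{1}(\U^{*}) = \Psi_{1}(\U)^{t}$.
\item Since $\Psi_{1}(\Id_{n}) = \Id_{2n}$, we get $\Psi_{1}(\U)^{t}\Psi_{1}(\U) = \Psi_{1}(\U^{*}\U) = \Id_{2n}$, so $\Psi_{1}(\U) \in \O(2n)$.
\item Since $\Lambda$ is real, $\Psi_{1}(\Lambda) = \begin{pmatrix} \Lambda & 0 \\ 0 & \Lambda \end{pmatrix} = \diag(\lambda_{1}\,, \ldots\,, \lambda_{n}\,, \lambda_{1}\,, \ldots\,, \lambda_{n})$, which is real diagonal with positive entries.
\end{itemize}
Hence $\Psi_{1}(\A) = \Psi_{1}(\U)\Psi_{1}(\Lambda)\Psi_{1}(\U)^{t}$ is a spectral decomposition of the form \eqref{DecompositionX}. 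By definition of the functional calculus,
\begin{equation*}
f(\Psi_{1}(\A)) = \Psi_{1}(\U) f(\Psi_{1}(\Lambda)) \Psi_{1}(\U)^{t}, \qquad f(\Psi_{1}(\Lambda)) = \begin{pmatrix} f(\Lambda) & 0 \\ 0 & f(\Lambda) \end{pmatrix} = \Psi_{1}(f(\Lambda)),
\end{equation*}
the second identity holding because $f(\Lambda)$ is real diagonal. Multiplicativity then gives
\begin{equation*}
f(\Psi_{1}(\A)) = \Psi_{1}(\U f(\Lambda)\U^{*}) = \Psi_{1}(f(\A)).
\end{equation*}

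\textbf{Case of $\Psi_{2}$.} Take $\B = \U\Lambda\U^{*} \in \mathscr{P}_{n}(\mathbb{H})$ with $\U \in \Sp(n)$. The same steps apply, now using $\Psi_{2}(\Z^{*}) = \Psi_{2}(\Z)^{*}$.
\begin{itemize}
\item $\Psi_{2}(\U)$ is unitary, since $\Psi_{2}(\U)^{*}\Psi_{2}(\U) = \Psi_{2}(\U^{*}\U) = \Id_{2n}$.
\item For real diagonal $\Lambda$, we have $\Psi_{2}(\Lambda) = \diag(\Lambda\,, \overline{\Lambda}) = \diag(\Lambda\,, \Lambda)$, because $\overline{\Lambda} = \Lambda$.
\end{itemize}
Everything then goes through verbatim.

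\textbf{Main obstacle: well-definedness.} The one delicate point is that $f(\X)$ must not depend on the chosen spectral decomposition. This is what justifies computing $f(\Psi_{1}(\A))$ from the particular decomposition $\Psi_{1}(\U)\Psi_{1}(\Lambda)\Psi_{1}(\U)^{t}$ built above. The standard fix is to note that $f(\X)$ coincides with $p(\X)$, where $p$ is a real polynomial interpolating $f$ on the finite set $\Spec(\X)$. Then $f(\X) = \U p(\Lambda)\U^{*} = p(\X)$ for any decomposition.

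This interpolation viewpoint gives an even cleaner route to the whole lemma. The spectra agree: $\Spec(\Psi_{1}(\A)) = \Spec(\A)$, and over $\mathbb{H}$, $\Spec(\Psi_{2}(\B)) = \Spec(\B)$ via right eigenvalues. So one may take the same polynomial $p$ on both sides, and since $\Psi_{i}$ is a unital algebra morphism,
\begin{equation*}
f(\Psi_{i}(\X)) = p(\Psi_{i}(\X)) = \Psi_{i}(p(\X)) = \Psi_{i}(f(\X)).
\end{equation*}
In the quaternionic case this uses that real scalars are central in $\Mat_{n}(\mathbb{H})$, so $p(\X)$ makes sense with real coefficients. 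I would include this polynomial argument as the justification for the direct computation above.
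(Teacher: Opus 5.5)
Your proof is correct and follows essentially the paper's route. The paper writes $\A=\sum_{j}\lambda_{j}\P_{j}$ with pairwise orthogonal spectral projections and checks that the $\Psi_{1}(\P_{j})$ are again such projections, so $\sum_{j}\lambda_{j}\Psi_{1}(\P_{j})$ is the spectral decomposition of $\Psi_{1}(\A)$; you transport the diagonalization $\A=\U\Lambda\U^{*}$ instead, which is the same idea in different packaging. Your interpolation argument is a worthwhile addition: it needs only that $\Psi_{i}$ is a unital $\mathbb{R}$-algebra morphism, and it settles independence of $f(\X)$ from the chosen decomposition, which the projection form handles implicitly via uniqueness of spectral projections.
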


\begin{proof}

We prove the statement for $\Psi_{1}$. The proof for $\Psi_{2}$ is identical. Let $\A \in \mathscr{P}_{n}(\mathbb{C})$. Since $\A$ is Hermitian and positive, it follows from Remark \ref{RemarkSpectral} that there exists $\lambda_{1}\,, \ldots\,, \lambda_{r}> 0 $ and pairwise orthogonal projections $\P_{1}\,, \ldots\,, \P_{r}$ such that
\begin{equation*}
\A = \sum\limits_{j = 1}^{r} \lambda_{j}\P_{j}\,.
\end{equation*}
Since $\Psi_{1}$ is an algebra homomorphism and preserves adjoints (see Equation \eqref{AdjointPsiOne}), we have
\begin{equation*}
\Psi_{1}(\A) = \sum\limits_{j = 1}^{r} \lambda_{j}\Psi_{1}(\P_{j})\,.
\end{equation*}
Moreover, the matrices $\Psi_{1}(\P_{j})$ are again pairwise orthogonal projections, because
\begin{equation*}
\Psi_{1}(\P_{j})^{2} = \Psi_{1}(\P_{j}^{2}) = \Psi_{1}(\P_{j})\,,
\end{equation*}
and, for $i \neq j$, we get
\begin{equation*}
\Psi_{1}(\P_{i})\Psi_{1}(\P_{j}) = \Psi_{1}(\P_{i}\P_{j}) = 0\,.
\end{equation*}
Thus this is the spectral decomposition of $\Psi_{1}(\A)$. Therefore, by classical functional calculus,
\begin{equation*}
f(\Psi_{1}(\A)) = \sum\limits_{j=1}^{r} f(\lambda_{j})\Psi_{1}(\P_{j})\,.
\end{equation*}
On the other hand,
\begin{equation*}
f(\A) = \sum\limits_{j=1}^{r} f(\lambda_{j})\P_{j}\,.
\end{equation*}
Applying $\Psi_{1}$ gives
\begin{equation*}
\Psi_{1}(f(\A)) = \sum_{j=1}^{r} f(\lambda_{j})\Psi_{1}(\P_{j})\,.
\end{equation*}
Finally
\begin{equation*}
f(\Psi_{1}(\A)) = \Psi_{1}(f(\A))\,.
\end{equation*}

\end{proof}

\section{Kubo-Ando means over $\mathbb{D}$}

We start this section by recalling the definition of means defined in \cite{KUBOANDO} (see also \cite{GRADUATE} for the quaternionic case).

\begin{definition}

A mean on $\mathscr{P}_{n}(\mathbb{D})$ is a continuous map
\begin{equation*}
\sigma_{\mathbb{D}}: \mathscr{P}_{n}(\mathbb{D}) \times \mathscr{P}_{n}(\mathbb{D}) \mapsto \mathscr{P}_{n}(\mathbb{D})
\end{equation*}
such that
\begin{enumerate}
\item If $\A \preceq \B$ and $\C \preceq \D$, then $\A \sigma_{\mathbb{D}} \C \preceq \B \sigma_{\mathbb{D}} \D$\,,
\item For all $g \in \GL_{n}(\mathbb{D})$ and $\A\,, \B \in \mathscr{P}_{n}(\mathbb{D})$, we have
\begin{equation*}
g\left(\A \sigma_{\mathbb{D}} \B\right)g^{*} = \left(g \A g^{*}\right) \sigma_{\mathbb{D}} \left(g \B g^{*}\right)\,.
\end{equation*}
\item $\Id_{n} \sigma_{\mathbb{D}} \Id_{n} = \Id_{n}$\,.
\end{enumerate}

\end{definition}

\begin{remark}

In \cite{KUBOANDO}, the authors use a different notion for "continuity". However, in finite dimension, the previous definition is equivalent to the one of \cite{KUBOANDO}\,.

\end{remark}

\noindent One central result of \cite{KUBOANDO} is summarized in the next theorem\,.

\begin{theo}

There exists a bijection between the means $\sigma_{\mathbb{D}}$ on $\mathscr{P}_{n}(\mathbb{D})$ and the set of operator monotone functions $f: \left(0\,, +\infty\right) \to \left(0\,, +\infty\right)$ such that $f(1) = 1$. More precisely, for all mean $\sigma_{\mathbb{D}}$ on $\mathscr{P}_{n}(\mathbb{D})$, there exists a unique operator monotone function $f: \left(0\,, +\infty\right) \to \left(0\,, +\infty\right)$, with $f(1) = 1$, satisfying
\begin{equation}
\A \sigma_{\mathbb{D}} \B = \A^{\frac{1}{2}}_{\mathbb{D}} f\left(\A^{-\frac{1}{2}}_{\mathbb{D}}\B\A^{-\frac{1}{2}}_{\mathbb{D}}\right)\A^{\frac{1}{2}}_{\mathbb{D}}\,, \qquad \left(\A\,, \B \in \mathscr{P}_{n}(\mathbb{D})\right)\,.
\label{EquationKuboAndoSigma}
\end{equation}

\label{TheoremKuboAndo}

\end{theo}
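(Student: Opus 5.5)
The plan is to follow Kubo and Ando's original argument, reorganised so that it uses only congruence invariance, monotonicity and spectral decomposition. Those three ingredients hold uniformly for $\mathbb{D}\in\{\mathbb{R},\mathbb{C},\mathbb{H}\}$ by Remark \ref{RemarkSpectral}. Whenever a genuinely analytic fact is needed over $\mathbb{H}$, I will transfer it from $\mathbb{C}$ using $\Psi_{2}$ and Lemma \ref{LemmaFunctionalCalculus}.

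\emph{Step 1: reduction to $\A=\Id_n$ and diagonalisation.} Applying congruence invariance with $g=\A^{-\frac12}_{\mathbb{D}}$ gives $\A\sigma_{\mathbb{D}}\B=\A^{\frac12}_{\mathbb{D}}\,(\Id_n\sigma_{\mathbb{D}}\X)\,\A^{\frac12}_{\mathbb{D}}$ with $\X=\A^{-\frac12}_{\mathbb{D}}\B\A^{-\frac12}_{\mathbb{D}}$. So it suffices to show that $\Id_n\sigma_{\mathbb{D}}\X=f(\X)$ for a scalar function $f$. Write $\X=\U\Lambda\U^{*}$ with $\U\in\U_{\mathbb{D}}$. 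Congruence by $\U$ fixes $\Id_n$, hence $\Id_n\sigma_{\mathbb{D}}\X=\U(\Id_n\sigma_{\mathbb{D}}\Lambda)\U^{*}$, and we are reduced to diagonal $\Lambda$.

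Next, congruence by the real diagonal sign matrices $\diag(\pm1,\dots,\pm1)$ fixes both $\Id_n$ and $\Lambda$. Therefore $\M:=\Id_n\sigma_{\mathbb{D}}\Lambda$ commutes with all of them, so $\M$ is diagonal. Its entries are real and positive because $\M$ is positive Hermitian. Permutation matrices show that the $i$-th entry $\mu_i$ is the same symmetric rule applied to $\lambda_i$ and the remaining eigenvalues.

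\emph{Step 2: $\mu_i$ depends only on $\lambda_i$.} I expect this to be the main obstacle. More generally, consider the pairs of positive diagonal matrices $(\diag(a),\diag(b))$ and $\mu_i(a,b)$, the $i$-th diagonal entry of $\diag(a)\sigma_{\mathbb{D}}\diag(b)$, which is diagonal by the same sign argument. Monotonicity (property (1)) makes $\mu_i$ nondecreasing in every $a_k$ and $b_k$. Congruence by $\D_s=\diag(1,\dots,\sqrt{s},\dots,1)$, with $\sqrt{s}$ in the $j$-th slot and $j\neq i$, shows that $\mu_i$ is unchanged under $(a_j,b_j)\mapsto(sa_j,sb_j)$.

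Now fix $j\ne i$ and compare $(1,\lambda_j)$ with $(1,\lambda_j')$ in the $j$-th slot, where $\lambda_j\le\lambda_j'$. Set $s=\lambda_j'/\lambda_j\ge1$, so that $(1,\lambda_j')\le(s,s\lambda_j)$ coordinatewise. Monotonicity and scale invariance then give
\[
\mu_i(\dots,1,\lambda_j,\dots)\le\mu_i(\dots,1,\lambda_j',\dots)\le\mu_i(\dots,s,s\lambda_j,\dots)=\mu_i(\dots,1,\lambda_j,\dots).
\]
Hence $\mu_i$ is independent of $\lambda_j$ for every $j\ne i$. Define $f(\lambda)$ by $\Id_n\sigma_{\mathbb{D}}(\lambda\Id_n)=f(\lambda)\Id_n$. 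It follows that $\Id_n\sigma_{\mathbb{D}}\Lambda=f(\Lambda)$, which yields \eqref{EquationKuboAndoSigma}.

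\emph{Step 3: properties of $f$, and the converse.} Property (3) gives $f(1)=1$, and property (1) applied with $\A=\C=\Id_n$ shows that $f$ is monotone on $\mathscr{P}_n(\mathbb{D})$. Uniqueness of $f$ is immediate from evaluating \eqref{EquationKuboAndoSigma} at $\A=\Id_n$, $\B=\lambda\Id_n$. Continuity of $\sigma_{\mathbb{D}}$ forces $f$ to be continuous; monotonicity on every $\mathscr{P}_m(\mathbb{D})$, which is what operator monotonicity requires, follows by the standard direct-sum/compression argument, or by fixing the convention that "operator monotone" refers to the relevant size.

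For the converse, take an operator monotone $f$ with $f(1)=1$ and define $\sigma$ by \eqref{EquationKuboAndoSigma}. Over $\mathbb{R}$ and $\mathbb{C}$ this is Kubo and Ando's theorem. Over $\mathbb{H}$, I would apply $\Psi_{2}$. By Lemma \ref{LemmaFunctionalCalculus}, and because $\Psi_{2}$ is an injective $*$-homomorphism, $\Psi_{2}(\A\sigma\B)=\Psi_{2}(\A)\,\sigma_{\mathbb{C}}\,\Psi_{2}(\B)$, where $\sigma_{\mathbb{C}}$ is the complex Kubo-Ando mean with the same $f$. By Proposition \ref{PropositionHtoC} and its positive semidefinite analogue, $\Psi_{2}$ preserves and reflects the Loewner order. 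So the monotonicity, continuity and normalisation of $\sigma_{\mathbb{C}}$ transfer back to $\sigma$. The transformer equality $g(\A\sigma\B)g^{*}=(g\A g^{*})\sigma(g\B g^{*})$ likewise follows from the complex case applied to $\Psi_{2}(g)$, since $\Psi_{2}(g^{*})=\Psi_{2}(g)^{*}$.
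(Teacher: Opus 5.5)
The paper does not prove this statement. It quotes it from Kubo and Ando (with a separate reference for $\mathbb{H}$), whose means are defined on all positive operators of a Hilbert space. Your Steps 1 and 2 are correct and give a pleasant self-contained finite-dimensional derivation of the representation formula. The congruences by $\A^{-\frac{1}{2}}_{\mathbb{D}}$, by $\U$, and by sign and permutation matrices, together with the scaling trick with $\D_{s}$ combined with monotonicity, do prove $\Id_{n}\sigma_{\mathbb{D}}\X=f(\X)$ uniformly for $\mathbb{D}\in\{\mathbb{R},\mathbb{C},\mathbb{H}\}$. The transfer of the converse to $\mathbb{H}$ through $\Psi_{2}$ is also fine, provided $f$ is operator monotone in the full sense.

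\textbf{The gap.} It is the sentence in Step 3 asserting that monotonicity of $f$ on every $\mathscr{P}_{m}(\mathbb{D})$ follows from a direct-sum/compression argument. Direct sums $\B\oplus\Id$ only carry monotonicity from size $n$ \emph{down} to sizes $m\le n$; nothing carries it up. A mean on the single cone $\mathscr{P}_{n}(\mathbb{D})$ encodes no more than this.

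Indeed, write $\sigma_{h}$ for the map $(\A,\B)\mapsto\A^{\frac{1}{2}}h(\A^{-\frac{1}{2}}\B\A^{-\frac{1}{2}})\A^{\frac{1}{2}}$, and set $g(x)=xf(1/x)$. Properties (1)--(3) give exactly that $f$ is continuous with $f(1)=1$, and that $f$ and $g$ are monotone on $\mathscr{P}_{n}(\mathbb{D})$; this uses $\Id_{n}\sigma\B=f(\B)$ and $\A\sigma\Id_{n}=\A f(\A^{-1})=g(\A)$. Conversely, these conditions already make $\sigma_{f}$ a mean. Congruence invariance of $\sigma_{f}$ holds for any $f$ (by polar decomposition), and the identity $\A\sigma_{f}\B=\B\sigma_{g}\A$ gives monotonicity in $\A$.

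Already for $n=1$, $a\sigma b=\min(a,b)$ is a mean on $\mathscr{P}_{1}(\mathbb{D})=(0,+\infty)$. Its function $f(x)=\min(1,x)$ is not operator monotone, since by Loewner's theorem operator monotone functions on $(0,+\infty)$ are real analytic.

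Your fallback convention does not rescue the argument. If ``operator monotone'' only means monotone on $\mathscr{P}_{n}(\mathbb{D})$, the converse fails: for $n=1$, $f(x)=x^{2}$ gives $a\sigma b=b^{2}/a$, which decreases in $a$. Moreover, Kubo and Ando's theorem, which needs full operator monotonicity, can then no longer be invoked. So the step cannot be repaired at a fixed $n$; the statement as written already fails for $n=1$.

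There are two ways out:
\begin{itemize}
\item place yourself in Kubo and Ando's setting (means on all operators, or a compatible family of means on all $\mathscr{P}_{m}(\mathbb{D})$), which is what the paper's citation implicitly does; or
\item replace ``operator monotone'' by ``continuous, with $f$ and $x\mapsto xf(1/x)$ monotone on $\mathscr{P}_{n}(\mathbb{D})$''. Your Steps 1--3, together with the identity $\A\sigma_{f}\B=\B\sigma_{g}\A$, then form a complete proof.
\end{itemize}
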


\begin{remark}

Let $\sigma_{\mathbb{C}}$ be a Kubo-Ando mean on $\mathscr{P}_{n}(\mathbb{C})$. Then it defines a map 
\begin{equation*}
\widetilde{\sigma}_{\mathbb{C}}: \widetilde{\mathscr{P}}_{n}(\mathbb{C}) \times \widetilde{\mathscr{P}}_{n}(\mathbb{C}) \to \widetilde{\mathscr{P}}_{n}(\mathbb{C})
\end{equation*}
given by
\begin{equation*}
\A \widetilde{\sigma}_{\mathbb{C}} \B = \Psi_{1}\left(\Psi^{-1}_{1}(\A) \sigma_{\mathbb{C}} \Psi^{-1}_{1}(\B)\right)\,, \qquad \left(\A\,, \B \in \widetilde{\mathscr{P}}_{n}(\mathbb{C})\right)\,.
\end{equation*}
Moreover, for all $g \in \Psi_{1}(\GL_{n}(\mathbb{C}))$ (not $\GL_{2n}(\mathbb{R}))$, we have
\begin{equation*}
g\left(\A \,\widetilde{\sigma}_{\mathbb{C}}\, \B\right)g^{t} = \left(g\A g^{t}\right) \widetilde{\sigma}_{\mathbb{C}} \left(g\B g^{t}\right)\,.
\end{equation*}

\end{remark}

\begin{proposition}

Let $\sigma_{1}$ and $\sigma_{2}$ be two means on $\mathscr{P}_{2n}(\mathbb{R})$ such that $\sigma_{1} = \sigma_{2}$ on $\widetilde{\mathscr{P}}_{n}(\mathbb{C})$. Then $\sigma_{1} = \sigma_{2}$\,.

\label{InjectivityMeans}

\end{proposition}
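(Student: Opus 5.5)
By Theorem \ref{TheoremKuboAndo}, applied with $\mathbb{D} = \mathbb{R}$ and $2n$ in place of $n$, there are operator monotone functions $f_{1}, f_{2}: (0, +\infty) \to (0, +\infty)$ with $f_{1}(1) = f_{2}(1) = 1$ such that
\begin{equation*}
\A \sigma_{k} \B = \A^{\frac{1}{2}}_{\mathbb{R}} f_{k}\left(\A^{-\frac{1}{2}}_{\mathbb{R}}\B\A^{-\frac{1}{2}}_{\mathbb{R}}\right)\A^{\frac{1}{2}}_{\mathbb{R}}\,, \qquad \left(\A\,, \B \in \mathscr{P}_{2n}(\mathbb{R})\,, k = 1, 2\right)\,.
\end{equation*}
The bijection in Theorem \ref{TheoremKuboAndo} sends each mean to a unique function. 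So it is enough to prove $f_{1} = f_{2}$ on $(0, +\infty)$; then $\sigma_{1} = \sigma_{2}$ on all of $\mathscr{P}_{2n}(\mathbb{R})$. The plan is to recover each $f_{k}$ from the values of $\sigma_{k}$ on a small family of pairs lying in $\widetilde{\mathscr{P}}_{n}(\mathbb{C})$.

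The key step is to take $\A = \Id_{2n}$ and $\B = t\,\Id_{2n}$ with $t > 0$. Both matrices lie in $\widetilde{\mathscr{P}}_{n}(\mathbb{C})$: they are images under $\Psi_{1}$ of $\Id_{n}$ and $t\,\Id_{n}$, which are in $\mathscr{P}_{n}(\mathbb{C})$. Equivalently, they are positive and commute with $\K_{2n}$, as in Proposition \ref{PropositionCtoR}.

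Next I evaluate the formula at this pair. We have $\Id^{\pm\frac{1}{2}}_{\mathbb{R}} = \exp_{\mathbb{R}}(0) = \Id_{2n}$, since $\log_{\mathbb{R}}(\Id_{2n}) = 0$. By the definition of functional calculus in Remark \ref{RemarkSpectral}, $f_{k}(t\,\Id_{2n}) = f_{k}(t)\,\Id_{2n}$. Hence
\begin{equation*}
\Id_{2n}\, \sigma_{k}\, (t\,\Id_{2n}) = f_{k}(t)\,\Id_{2n}\,, \qquad (k = 1, 2)\,.
\end{equation*}
The hypothesis that $\sigma_{1} = \sigma_{2}$ on $\widetilde{\mathscr{P}}_{n}(\mathbb{C})$ then gives $f_{1}(t) = f_{2}(t)$ for every $t > 0$, so $f_{1} = f_{2}$ and therefore $\sigma_{1} = \sigma_{2}$.

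The only point that needs care is the meaning of "$\sigma_{1} = \sigma_{2}$ on $\widetilde{\mathscr{P}}_{n}(\mathbb{C})$". I read it as equality for all pairs $(\A, \B) \in \widetilde{\mathscr{P}}_{n}(\mathbb{C}) \times \widetilde{\mathscr{P}}_{n}(\mathbb{C})$; the argument only uses the scalar pairs $(\Id_{2n}, t\,\Id_{2n})$.
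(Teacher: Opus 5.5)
Your proof is correct and is essentially the paper's argument. Both evaluate the means at the scalar pairs $(\Id_{2n}, t\Id_{2n}) = (\Psi_{1}(\Id_{n}), \Psi_{1}(t\Id_{n}))$ in $\widetilde{\mathscr{P}}_{n}(\mathbb{C})$ to obtain $f_{1}(t)\Id_{2n} = f_{2}(t)\Id_{2n}$, and then invoke the Kubo--Ando representation formula to conclude $\sigma_{1} = \sigma_{2}$.
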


\begin{proof}

We denote by $f_{1}$ and $f_{2}$ the Borel functions on $\left(0\,, +\infty\right)$ coming from Theorem \ref{TheoremKuboAndo}. For all $x > 0$, $x\Id_{n} \in \mathscr{P}_{n}(\mathbb{C})$, and $\Psi_{1}(x\Id_{n}) = x\Id_{2n} \in \widetilde{\mathscr{P}}_{n}(\mathbb{C})$. Using that
\begin{equation*}
\Id_{2n} \sigma_{1} \left(x\Id_{2n}\right) = f_{1}(x)\Id_{2n} \qquad \text{ and } \qquad \Id_{2n} \sigma_{2} \left(x\Id_{2n}\right) = f_{2}(x)\Id_{2n}
\end{equation*}
and that $\sigma_{1} = \sigma_{2}$ on $\widetilde{\mathscr{P}}_{n}(\mathbb{C})$, it follows that 
\begin{equation*}
f_{1}(x) = f_{2}(x)\,, \qquad \left(x \in \left(0\,, +\infty\right)\right)\,.
\end{equation*}
Then it follows from Equation \eqref{EquationKuboAndoSigma} that $\sigma_{1} = \sigma_{2}$\,.

\end{proof}

\begin{proposition}

Let $\sigma_{\mathbb{C}}$ be a Kubo-Ando mean on $\mathscr{P}_{n}(\mathbb{C})$. Then there exists a unique Kubo-Ando mean $\sigma_{\mathbb{R}}$ on $\mathscr{P}_{2n}(\mathbb{R})$ such that $\left(\sigma_{\mathbb{R}}\right)_{|_{\widetilde{\mathscr{P}}_{n}(\mathbb{C})}} = \widetilde{\sigma}_{\mathbb{C}}$\,.

\label{SurjectivityMeans}

\end{proposition}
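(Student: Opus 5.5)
Uniqueness is immediate from Proposition \ref{InjectivityMeans}: if $\sigma_{\mathbb{R}}$ and $\sigma'_{\mathbb{R}}$ are two means on $\mathscr{P}_{2n}(\mathbb{R})$ that both restrict to $\widetilde{\sigma}_{\mathbb{C}}$ on $\widetilde{\mathscr{P}}_{n}(\mathbb{C})$, then they agree there, hence everywhere. The work is existence, and the plan is to build $\sigma_{\mathbb{R}}$ from the same representing function as $\sigma_{\mathbb{C}}$.

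Let $f$ be the operator monotone function attached to $\sigma_{\mathbb{C}}$ by Theorem \ref{TheoremKuboAndo}. Define $\sigma_{\mathbb{R}}$ on $\mathscr{P}_{2n}(\mathbb{R})$ by Equation \eqref{EquationKuboAndoSigma} with this $f$ and $\mathbb{D}=\mathbb{R}$. This is a Kubo-Ando mean on $\mathscr{P}_{2n}(\mathbb{R})$ because $f$ is operator monotone with $f(1)=1$, by the bijection of Theorem \ref{TheoremKuboAndo} over $\mathbb{R}$.

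One point needs care: operator monotonicity is dimension- and field-independent. A function operator monotone on complex matrices of all sizes is operator monotone on real symmetric ones, since real symmetric matrices are complex Hermitian and the Loewner order agrees. This is the standard reading of ``operator monotone'' in Theorem \ref{TheoremKuboAndo}, so I take it as given.

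It remains to check the restriction. Let $\A=\Psi_{1}(\A')$ and $\B=\Psi_{1}(\B')$ with $\A',\B'\in\mathscr{P}_{n}(\mathbb{C})$. The steps are:
\begin{itemize}
\item By Lemma \ref{LemmaFunctionalCalculus} applied to $x\mapsto x^{\pm 1/2}$, we get $\A^{\pm\frac12}_{\mathbb{R}}=\Psi_{1}\bigl((\A')^{\pm\frac12}_{\mathbb{C}}\bigr)$. To be careful, $\A^{t}_{\mathbb{D}}$ was defined through $\exp$ and $\log$, but it coincides with the spectral $x\mapsto x^{t}$, so the lemma applies.
\item Since $\Psi_{1}$ is multiplicative, $\A^{-\frac12}\B\A^{-\frac12}=\Psi_{1}(\X')$, where $\X'=(\A')^{-\frac12}\B'(\A')^{-\frac12}\in\mathscr{P}_{n}(\mathbb{C})$.
\item Lemma \ref{LemmaFunctionalCalculus} again gives $f(\Psi_{1}(\X'))=\Psi_{1}(f(\X'))$.
\end{itemize}
Multiplying out with $\Psi_{1}$ a homomorphism yields $\A\sigma_{\mathbb{R}}\B=\Psi_{1}(\A'\sigma_{\mathbb{C}}\B')=\A\,\widetilde{\sigma}_{\mathbb{C}}\,\B$.

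The main obstacle is not computational. It is the two consistency checks above: that the representing function transfers between fields, and that the fractional powers defined via $\exp_{\mathbb{D}}$ and $\log_{\mathbb{D}}$ agree with the functional calculus to which Lemma \ref{LemmaFunctionalCalculus} applies. The second follows because $\exp$ and $\log$ act eigenvalue-wise on the decomposition \eqref{DecompositionX}.
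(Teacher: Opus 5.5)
Your proposal is correct and follows the paper's proof. Like the paper, you define $\sigma_{\mathbb{R}}$ by the Kubo-Ando formula with the same representing function $f$, get uniqueness from Proposition~\ref{InjectivityMeans}, and check the restriction via Lemma~\ref{LemmaFunctionalCalculus} (applied to $x\mapsto x^{\pm 1/2}$ and to $f$) together with your remark that operator monotonicity does not depend on the field — this simply makes explicit what the paper dismisses with ``one can easily see''.
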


\begin{proof}

Let $f: \left(0\,, +\infty\right) \to \left(0\,, +\infty\right)$ be the (unique) Borel function corresponding to $\sigma_{\mathbb{C}}$. Let $\sigma_{\mathbb{R}}: \mathscr{P}_{2n}(\mathbb{R}) \times \mathscr{P}_{2n}(\mathbb{R}) \to \mathscr{P}_{2n}(\mathbb{R})$ be the map defined by
\begin{equation*}
\A \sigma_{\mathbb{R}} \B = \A^{\frac{1}{2}}_{\mathbb{R}}f\left(\A^{-\frac{1}{2}}_{\mathbb{R}}\B\A^{-\frac{1}{2}}_{\mathbb{R}}\right)\A^{\frac{1}{2}}_{\mathbb{R}}\,, \qquad \left(\A\,, \B \in \mathscr{P}_{2n}(\mathbb{R})\right)\,.
\end{equation*}
One can easily see that $\sigma_{\mathbb{R}}$ defines a mean on $\mathscr{P}_{2n}(\mathbb{R})$ and one can easily see that the restriction of $\sigma_{\mathbb{R}}$ to $\widetilde{\mathscr{P}}_{n}(\mathbb{C})$ is equal to $\sigma_{\mathbb{C}}$. The unicity of $\sigma_{\mathbb{R}}$ is obtained from Proposition \ref{InjectivityMeans}.

\end{proof}

\begin{theo}

We have a one-to-one correspondence between Kubo-Ando means on $\mathscr{P}_{n}(\mathbb{C})$ and $\mathscr{P}_{2n}(\mathbb{R})$. More precisely, for any Kubo-Ando mean $\sigma_{\mathbb{C}}$ on $\mathscr{P}_{n}(\mathbb{C})$, there exists a unique Kubo-Ando mean $\sigma_{\mathbb{R}}$ on $\mathscr{P}_{2n}(\mathbb{R})$ such that
\begin{equation*}
\Psi_{1}\left(\A \sigma_{\mathbb{C}} \B\right) = \Psi_{1}(\A) \sigma_{\mathbb{R}} \Psi_{1}(\B)\,, \qquad \left(\A\,, \B \in \mathscr{P}_{n}(\mathbb{C})\right)\,.
\end{equation*}

\label{OneToOneCorrespondenceComplex}

\end{theo}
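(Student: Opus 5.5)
The plan is to deduce the theorem from Propositions \ref{InjectivityMeans} and \ref{SurjectivityMeans}, using Theorem \ref{TheoremKuboAndo} to pass between means and their representing operator monotone functions. The key point is that on both cones a mean is determined by a single function $f$, and the same $f$ can be used on both sides.

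\emph{Existence.} Let $\sigma_{\mathbb{C}}$ be given, with representing function $f$ from Theorem \ref{TheoremKuboAndo}. Define $\sigma_{\mathbb{R}}$ on $\mathscr{P}_{2n}(\mathbb{R})$ by Equation \eqref{EquationKuboAndoSigma} with the same $f$, as in Proposition \ref{SurjectivityMeans}. We then verify the intertwining identity directly. For $\A\in\mathscr{P}_{n}(\mathbb{C})$, Lemma \ref{LemmaFunctionalCalculus} applied to $t\mapsto t^{\pm 1/2}$ gives
\begin{equation*}
\Psi_{1}(\A)^{\pm\frac{1}{2}}_{\mathbb{R}}=\Psi_{1}\left(\A^{\pm\frac{1}{2}}_{\mathbb{C}}\right).
\end{equation*}
Here one should note that the power defined via $\exp/\log$ coincides with the spectral functional calculus, by uniqueness of the positive logarithm. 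Since $\Psi_{1}$ is multiplicative,
\begin{equation*}
\Psi_{1}(\A)^{-\frac{1}{2}}_{\mathbb{R}}\Psi_{1}(\B)\Psi_{1}(\A)^{-\frac{1}{2}}_{\mathbb{R}}=\Psi_{1}\left(\A^{-\frac{1}{2}}_{\mathbb{C}}\B\A^{-\frac{1}{2}}_{\mathbb{C}}\right).
\end{equation*}
The argument inside $\Psi_{1}$ lies in $\mathscr{P}_{n}(\mathbb{C})$, so Lemma \ref{LemmaFunctionalCalculus} applies once more to $f$ and lets us pull $f$ through $\Psi_{1}$. Multiplying by $\Psi_{1}(\A^{1/2}_{\mathbb{C}})$ on both sides then gives $\Psi_{1}(\A\sigma_{\mathbb{C}}\B)=\Psi_{1}(\A)\sigma_{\mathbb{R}}\Psi_{1}(\B)$.

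\emph{Uniqueness.} If $\sigma'_{\mathbb{R}}$ also satisfies the identity, then $\sigma'_{\mathbb{R}}$ and $\sigma_{\mathbb{R}}$ agree on $\widetilde{\mathscr{P}}_{n}(\mathbb{C})$, so they are equal by Proposition \ref{InjectivityMeans}.

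\emph{Bijectivity.} The statement calls this a one-to-one correspondence, so we also check that $\sigma_{\mathbb{C}}\mapsto\sigma_{\mathbb{R}}$ is a bijection. By construction this map is $f\mapsto f$ on representing functions, and both sets of means are parametrized by the same set of operator monotone $f$ with $f(1)=1$ (Theorem \ref{TheoremKuboAndo} for $\mathbb{D}=\mathbb{C}$ and $\mathbb{D}=\mathbb{R}$). Injectivity holds because the scalar matrices $x\Id_{n}$ recover $f$, as in the proof of Proposition \ref{InjectivityMeans}. Surjectivity holds because any mean on $\mathscr{P}_{2n}(\mathbb{R})$ has some $f$, which also defines a mean on $\mathscr{P}_{n}(\mathbb{C})$.

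The main obstacle is that the operator monotonicity class must be the same over $\mathbb{R}$ and $\mathbb{C}$. Otherwise the $f$ coming from $\mathscr{P}_{n}(\mathbb{C})$ might fail to yield a mean on $\mathscr{P}_{2n}(\mathbb{R})$, or conversely. Theorem \ref{TheoremKuboAndo} as stated refers to functions that are operator monotone in the usual (all-dimensions, complex) sense for every $\mathbb{D}$, so this is inherited from it and I would invoke it rather than re-prove it.
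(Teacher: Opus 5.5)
Your proposal is correct and follows the paper's route: the correspondence is $f\mapsto f$ on representing functions, with existence from Proposition~\ref{SurjectivityMeans}, uniqueness from Proposition~\ref{InjectivityMeans}, and everything resting on Theorem~\ref{TheoremKuboAndo} parametrizing means over $\mathbb{R}$ and over $\mathbb{C}$ by the same class of functions. Your explicit check of the intertwining identity, via Lemma~\ref{LemmaFunctionalCalculus} applied to $t^{\pm 1/2}$ and to $f$ together with multiplicativity of $\Psi_{1}$, fills in the step the paper dismisses as ``one can easily see''.
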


\begin{proof}

The correspondence is given by equality of the representing operator
monotone functions\,.

\end{proof}

\begin{remark}

We can easily see that we have an analogue of Theorem \ref{OneToOneCorrespondenceComplex} for quaternionic Hermitian matrices. Indeed, any Kubo-Ando mean $\widetilde{\sigma}_{\mathbb{H}}$ on $\mathscr{P}_{m}(\mathbb{H})$ defines a map 
\begin{equation*}
\widetilde{\sigma}_{\mathbb{H}}: \widetilde{\mathscr{P}}_{m}(\mathbb{H}) \times \widetilde{\mathscr{P}}_{m}(\mathbb{H}) \to \widetilde{\mathscr{P}}_{m}(\mathbb{H})
\end{equation*}
given by
\begin{equation*}
\A \widetilde{\sigma}_{\mathbb{H}} \B = \Psi_{2}\left(\Psi^{-1}_{2}(\A) \sigma_{\mathbb{H}} \Psi^{-1}_{2}(\B)\right)\,, \qquad \left(\A\,, \B \in \widetilde{\mathscr{P}}_{m}(\mathbb{H})\right)\,.
\end{equation*}
The next proposition is an analogous of Proposition \ref{SurjectivityMeans} for $\mathbb{H}$\,.

\end{remark}

\begin{proposition}

Let $\sigma_{\mathbb{H}}$ be a Kubo-Ando mean on $\mathscr{P}_{m}(\mathbb{H})$. Then there exists a unique Kubo-Ando mean $\sigma_{\mathbb{C}}$ on $\mathscr{P}_{2m}(\mathbb{C})$ such that $\left(\sigma_{\mathbb{C}}\right)_{|_{\widetilde{\mathscr{P}}_{m}(\mathbb{H})}} = \widetilde{\sigma}_{\mathbb{H}}$\,.

\label{SurjectivityMeansH}

\end{proposition}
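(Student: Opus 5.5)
We mirror the proof of Proposition \ref{SurjectivityMeans}, with $\Psi_{2}$ in place of $\Psi_{1}$. By Theorem \ref{TheoremKuboAndo} (valid for $\mathbb{D}=\mathbb{H}$), $\sigma_{\mathbb{H}}$ has a unique representing operator monotone function $f:(0,+\infty)\to(0,+\infty)$ with $f(1)=1$, and
\begin{equation*}
\A \sigma_{\mathbb{H}} \B = \A^{\frac{1}{2}}_{\mathbb{H}} f\left(\A^{-\frac{1}{2}}_{\mathbb{H}}\B\A^{-\frac{1}{2}}_{\mathbb{H}}\right)\A^{\frac{1}{2}}_{\mathbb{H}}\,.
\end{equation*}
Define $\sigma_{\mathbb{C}}$ on $\mathscr{P}_{2m}(\mathbb{C})$ by the same formula over $\mathbb{C}$. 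Since $f$ is operator monotone on $(0,+\infty)$ with $f(1)=1$, Theorem \ref{TheoremKuboAndo} for $\mathbb{D}=\mathbb{C}$ (the converse direction of the Kubo--Ando correspondence) shows that $\sigma_{\mathbb{C}}$ is a Kubo--Ando mean. Operator monotonicity is a property of the scalar function and does not depend on the matrix size, so no extra hypothesis is needed here.

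To prove the restriction property, take $\A,\B\in\mathscr{P}_{m}(\mathbb{H})$. First, $\Psi_{2}$ is an injective algebra morphism with $\Psi_{2}(\Z^{*})=\Psi_{2}(\Z)^{*}$, so Lemma \ref{LemmaFunctionalCalculus} applies; with $t\mapsto t^{\pm 1/2}$ (Borel and positive) it gives
\begin{equation*}
\Psi_{2}(\A)^{\pm\frac{1}{2}}_{\mathbb{C}}=\Psi_{2}\left(\A^{\pm\frac{1}{2}}_{\mathbb{H}}\right)\,.
\end{equation*}
Here one should check that the power defined through $\exp$ and $\log$ agrees with the spectral one. This holds because both are the unique positive $\pm\frac12$-powers, or alternatively because $\Psi_{2}$ commutes with the exponential series. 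Second, multiplicativity gives
\begin{equation*}
\Psi_{2}(\A)^{-\frac{1}{2}}_{\mathbb{C}}\Psi_{2}(\B)\Psi_{2}(\A)^{-\frac{1}{2}}_{\mathbb{C}}=\Psi_{2}(\X)\,, \qquad \X=\A^{-\frac{1}{2}}_{\mathbb{H}}\B\A^{-\frac{1}{2}}_{\mathbb{H}}\in\mathscr{P}_{m}(\mathbb{H})\,.
\end{equation*}
Third, Lemma \ref{LemmaFunctionalCalculus} again gives $f(\Psi_{2}(\X))=\Psi_{2}(f(\X))$. Multiplying out, we obtain $\Psi_{2}(\A)\sigma_{\mathbb{C}}\Psi_{2}(\B)=\Psi_{2}(\A\sigma_{\mathbb{H}}\B)$, which is exactly $\Psi_{2}(\A)\,\widetilde{\sigma}_{\mathbb{H}}\,\Psi_{2}(\B)$.

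For uniqueness, we repeat Proposition \ref{InjectivityMeans} verbatim. Suppose two means $\sigma_{1},\sigma_{2}$ on $\mathscr{P}_{2m}(\mathbb{C})$ agree on $\widetilde{\mathscr{P}}_{m}(\mathbb{H})$. Since $x\Id_{2m}=\Psi_{2}(x\Id_{m})\in\widetilde{\mathscr{P}}_{m}(\mathbb{H})$ for $x>0$, we have $\Id_{2m}\sigma_{i}(x\Id_{2m})=f_{i}(x)\Id_{2m}$, and therefore $f_{1}=f_{2}$. Theorem \ref{TheoremKuboAndo} then gives $\sigma_{1}=\sigma_{2}$.

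The only delicate point is the compatibility of $\Psi_{2}$ with functional calculus in the quaternionic setting. That is, one needs the quaternionic spectral decomposition $\A=\sum\lambda_{j}\P_{j}$, with real $\lambda_{j}$ and quaternionic orthogonal projections $\P_{j}$, to map to the complex spectral decomposition of $\Psi_{2}(\A)$. This is the content of Lemma \ref{LemmaFunctionalCalculus}, which relies on the real right-eigenvalues from Remark \ref{RemarkSpectral}. Granting that lemma, the remaining steps are purely formal.
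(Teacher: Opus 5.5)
Your proof is correct and follows the paper's route: the paper only says the argument is analogous to Proposition~\ref{SurjectivityMeans}, and you carry that out in full, with the same representing function $f$, the restriction checked through Lemma~\ref{LemmaFunctionalCalculus} for $\Psi_{2}$, and uniqueness from $x\Id_{2m}=\Psi_{2}(x\Id_{m})$ as in Proposition~\ref{InjectivityMeans}. Like the paper, your argument depends on Theorem~\ref{TheoremKuboAndo} producing an $f$ that is operator monotone in every dimension from a mean on the fixed cone $\mathscr{P}_{m}(\mathbb{H})$; the fixed-size axioms alone do not guarantee this (for $m=1$, $(a,b)\mapsto\min(a,b)$ satisfies them, but the matrix function $t\mapsto\min(1,t)$ is not monotone on $\mathscr{P}_{2}(\mathbb{C})$), so your remark that operator monotonicity does not depend on matrix size is only as reliable as that theorem.
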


\noindent The proof is analogue to the one of Proposition \ref{SurjectivityMeans}. We then get the following theorem\,.

\begin{theo}

We have a one-to-one correspondence between Kubo-Ando means on $\mathscr{P}_{m}(\mathbb{H})$ and $\mathscr{P}_{2m}(\mathbb{C})$. More precisely, for any Kubo-Ando mean $\sigma_{\mathbb{H}}$ on $\mathscr{P}_{m}(\mathbb{H})$, there exists a unique Kubo-Ando mean $\sigma_{\mathbb{C}}$ on $\mathscr{P}_{2m}(\mathbb{C})$ such that
\begin{equation*}
\Psi_{2}\left(\A \sigma_{\mathbb{H}} \B\right) = \Psi_{2}(\A) \sigma_{\mathbb{C}} \Psi_{2}(\B)\,, \qquad \left(\A\,, \B \in \mathscr{P}_{m}(\mathbb{H})\right)\,.
\end{equation*}

\label{OneToOneCorrespondenceQuaternion}

\end{theo}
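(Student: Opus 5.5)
Let $f$ be the operator monotone function attached to $\sigma_{\mathbb{H}}$ by Theorem \ref{TheoremKuboAndo}, so that $f:\left(0\,, +\infty\right) \to \left(0\,, +\infty\right)$ and $f(1)=1$. The plan is to define $\sigma_{\mathbb{C}}$ on $\mathscr{P}_{2m}(\mathbb{C})$ by the same Kubo-Ando formula \eqref{EquationKuboAndoSigma} with this $f$. By Theorem \ref{TheoremKuboAndo} applied with $\mathbb{D}=\mathbb{C}$, this is a Kubo-Ando mean. Then I will check the intertwining identity, and deduce uniqueness from the quaternionic analogue of Proposition \ref{InjectivityMeans}. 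Conversely, every mean on $\mathscr{P}_{2m}(\mathbb{C})$ has some representing $f$, and that $f$ also defines a mean on $\mathscr{P}_{m}(\mathbb{H})$. Hence the assignment $\sigma_{\mathbb{H}}\mapsto\sigma_{\mathbb{C}}$ is a bijection, exactly as in Theorem \ref{OneToOneCorrespondenceComplex}.

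For the intertwining identity, fix $\A, \B \in \mathscr{P}_{m}(\mathbb{H})$. Apply Lemma \ref{LemmaFunctionalCalculus} to $\Psi_{2}$ with the Borel functions $x\mapsto x^{\pm 1/2}$. This gives
\begin{equation*}
\Psi_{2}\left(\A^{\pm\frac{1}{2}}_{\mathbb{H}}\right) = \Psi_{2}(\A)^{\pm\frac{1}{2}}_{\mathbb{C}}\,,
\end{equation*}
where I use that $\X^{t}_{\mathbb{D}}$ agrees with the spectral definition $\U\Lambda^{t}\U^{*}$. Since $\Psi_{2}$ is multiplicative,
\begin{equation*}
\Psi_{2}\left(\A^{-\frac{1}{2}}_{\mathbb{H}}\B\A^{-\frac{1}{2}}_{\mathbb{H}}\right) = \Psi_{2}(\A)^{-\frac{1}{2}}_{\mathbb{C}}\Psi_{2}(\B)\Psi_{2}(\A)^{-\frac{1}{2}}_{\mathbb{C}}\,.
\end{equation*}
The argument of $f$ lies in $\mathscr{P}_{m}(\mathbb{H})$, so a second application of Lemma \ref{LemmaFunctionalCalculus} moves $f$ through $\Psi_{2}$. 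Multiplying by $\Psi_{2}(\A^{\frac{1}{2}}_{\mathbb{H}})$ on both sides then yields $\Psi_{2}(\A\sigma_{\mathbb{H}}\B)=\Psi_{2}(\A)\sigma_{\mathbb{C}}\Psi_{2}(\B)$. This is equivalent to $(\sigma_{\mathbb{C}})_{|_{\widetilde{\mathscr{P}}_{m}(\mathbb{H})}}=\widetilde{\sigma}_{\mathbb{H}}$, which is Proposition \ref{SurjectivityMeansH}.

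For uniqueness, suppose $\sigma_{\mathbb{C}}'$ is another mean with the same property, with representing function $f'$. For every $x>0$ we have $x\Id_{m}\in\mathscr{P}_{m}(\mathbb{H})$ and $\Psi_{2}(x\Id_{m})=x\Id_{2m}$. Evaluating both means at $(\Id_{2m}, x\Id_{2m})$ gives $f'(x)\Id_{2m}=\Psi_{2}(f(x)\Id_{m})=f(x)\Id_{2m}$. Hence $f'=f$, and $\sigma_{\mathbb{C}}'=\sigma_{\mathbb{C}}$ by the uniqueness part of Theorem \ref{TheoremKuboAndo}.

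The only real point to verify is the quaternionic input: that Lemma \ref{LemmaFunctionalCalculus} really holds for $\Psi_{2}$. The paper asserts that the proof is identical to the $\Psi_{1}$ case, and I take that as given. That argument needs a spectral decomposition of quaternionic positive matrices into real eigenvalues and orthogonal projections, which comes from Remark \ref{RemarkSpectral}. It also needs $\Psi_{2}$ to be a real-algebra homomorphism compatible with adjoints. Both are available, so I expect no serious obstacle.
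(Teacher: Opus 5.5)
Your proof is correct and follows the paper's route: you define $\sigma_{\mathbb{C}}$ by the same representing function $f$, check that it restricts to $\widetilde{\sigma}_{\mathbb{H}}$ on $\widetilde{\mathscr{P}}_{m}(\mathbb{H})$ (the quaternionic analogue of Proposition \ref{SurjectivityMeans}), and get uniqueness by evaluating at the scalar matrices $x\Id_{2m}$, exactly as in Proposition \ref{InjectivityMeans}. Your explicit use of Lemma \ref{LemmaFunctionalCalculus} for $x\mapsto x^{\pm\frac{1}{2}}$ and for $f$ just spells out the restriction check that the paper calls ``easy to see''.
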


\begin{theo}

The set $\bigsqcup\limits_{n \in \mathbb{N}}\left\{\text{KAM on } \mathscr{P}_{n}(\mathbb{R})\right\}$, where $KAM$ stands for Kubo-Ando means, can be decomposed as
\begin{equation*}
\bigsqcup\limits_{n \in \mathbb{N}}\left\{\text{KAM on } \mathscr{P}_{4n+a}(\mathbb{R})\right\} \sqcup \bigsqcup\limits_{n \in \mathbb{N}}\left\{\text{KAM on } \mathscr{P}_{4n+2}(\mathbb{R})\right\} \sqcup \bigsqcup\limits_{n \in \mathbb{N}}\left\{\text{KAM on } \mathscr{P}_{4n}(\mathbb{R})\right\} \end{equation*}
with $a \in \left\{1\,, 3\right\}$\,.

\end{theo}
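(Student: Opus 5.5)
The statement is essentially a reorganization of the index set combined with the two correspondence theorems, so I would prove it in two layers: a purely set-theoretic partition, followed by an interpretation of each piece via Theorems \ref{OneToOneCorrespondenceComplex} and \ref{OneToOneCorrespondenceQuaternion}.

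\textbf{Step 1: partitioning the index set.} I would partition $\mathbb{N}^{*}$ according to residues modulo $4$: every $m \geq 1$ is uniquely of the form $4n+1$, $4n+3$, $4n+2$, or $4n$ (with $n \geq 1$ in the last case). A disjoint union indexed by a set is the disjoint union of the subfamilies indexed by the blocks of a partition of that set. This gives the displayed decomposition immediately, where the first piece is read as the union over both values $a \in \left\{1\,, 3\right\}$, i.e. over all odd $m$. This step is tautological. The only point to settle is the indexing convention, namely whether $n = 0$ is allowed in the blocks $4n$ and $4n+2$. I would state that convention explicitly so that the three pieces are indeed disjoint and exhaust $\mathbb{N}^{*}$.

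\textbf{Step 2: interpreting the pieces.} The substantive content I would attach to the statement is what each piece represents. For $m = 4n+2 = 2(2n+1)$, Theorem \ref{OneToOneCorrespondenceComplex} identifies the Kubo-Ando means on $\mathscr{P}_{4n+2}(\mathbb{R})$ with those on $\mathscr{P}_{2n+1}(\mathbb{C})$. For $m = 4n$, composing Theorem \ref{OneToOneCorrespondenceComplex} (with $2n$ in place of $n$) and Theorem \ref{OneToOneCorrespondenceQuaternion} (with $m = n$) identifies the means on $\mathscr{P}_{4n}(\mathbb{R})$ with those on $\mathscr{P}_{2n}(\mathbb{C})$ and hence with those on $\mathscr{P}_{n}(\mathbb{H})$. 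In both cases the identification is through equality of representing operator monotone functions, via Theorem \ref{TheoremKuboAndo}. The composite is compatible with $\Psi_{1}\circ\Psi_{2}$ because $\Psi_{1}(\Psi_{2}(\A)\sigma\Psi_{2}(\B)) = \Psi_{1}\Psi_{2}(\A)\,\sigma_{\mathbb{R}}\,\Psi_{1}\Psi_{2}(\B)$, which follows by chaining the two displayed identities. The odd pieces are precisely the real dimensions that admit no complex structure $\K$, so they arise from neither embedding.

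\textbf{Main obstacle.} The difficulty is not mathematical but one of precision. The statement as written is a bare set partition, so I expect the real work to be fixing the conventions: the meaning of ``$a \in \{1,3\}$'' as a union over both values, and the range of $n$. Once these are explicit, Step 1 is immediate and Step 2 is a direct citation of the two correspondence theorems.
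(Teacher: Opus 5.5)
Your proposal is correct and matches the paper. The paper gives no separate proof: the statement is just the partition of the index set by residues modulo $4$ (with $a$ ranging over both $1$ and $3$), and its placement right after Theorems \ref{OneToOneCorrespondenceComplex} and \ref{OneToOneCorrespondenceQuaternion} indicates that the $4n+2$ and $4n$ blocks are meant to be read exactly as you read them. On your convention point, note that the paper writes $\mathbb{N}^{*}$ for the positive integers, so $\mathbb{N}$ contains $0$ and the degenerate $m=0$ term appears on both sides through the $4n$ block with $n=0$; dropping it from both sides, as you do, is equally consistent.
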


\begin{lemma}

The maps $\Psi_{1}$ and $\Psi_{2}$ preserve the Loewner order. More precisely,
\begin{enumerate}
\item If $\A\,, \B \in \mathscr{P}_{n}(\mathbb{C})$ such that $\A \preceq \B$, then
\begin{equation*}
\Psi_{1}(\A) \preceq \Psi_{1}(\B)\,.
\end{equation*}
\item If $\C\,, \D \in \mathscr{P}_{m}(\mathbb{H})$ such that $\C \preceq \D$, then
\begin{equation*}
\Psi_{2}(\C) \preceq \Psi_{2}(\D)\,.
\end{equation*}
\end{enumerate}

\end{lemma}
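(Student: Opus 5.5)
The plan is to reduce the statement to the fact, already established, that $\Psi_{1}$ and $\Psi_{2}$ map positive semidefinite matrices to positive semidefinite matrices, and then to use linearity. Since both $\Psi_{1}$ and $\Psi_{2}$ are $\mathbb{R}$-linear (indeed algebra morphisms), we have
\begin{equation*}
\Psi_{1}(\B) - \Psi_{1}(\A) = \Psi_{1}(\B - \A)\,, \qquad \Psi_{2}(\D) - \Psi_{2}(\C) = \Psi_{2}(\D - \C)\,.
\end{equation*}
By definition of the Loewner order, $\A \preceq \B$ means $\B - \A \in \mathscr{P}^{0}_{n}(\mathbb{C})$. It therefore suffices to show that $\Psi_{1}\left(\mathscr{P}^{0}_{n}(\mathbb{C})\right) \subseteq \mathscr{P}^{0}_{2n}(\mathbb{R})$ and $\Psi_{2}\left(\mathscr{P}^{0}_{m}(\mathbb{H})\right) \subseteq \mathscr{P}^{0}_{2m}(\mathbb{C})$.

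For $\Psi_{1}$, this is the Remark following Proposition \ref{PropositionCtoR}. To be self-contained, I would recall why. The computation in the proof of Proposition \ref{PropositionCtoR} gives, for a Hermitian $\X = \A' + i\B'$ and $z = u + iv$,
\begin{equation*}
\begin{pmatrix} u \\ -v \end{pmatrix}^{t}\Psi_{1}(\X)\begin{pmatrix} u \\ -v \end{pmatrix} = z^{*}\X z\,.
\end{equation*}
Since $(u,v)\mapsto(u,-v)$ is a bijection of $\mathbb{R}^{2n}$, nonnegativity of $z^{*}\X z$ for all $z$ yields nonnegativity of the real quadratic form. Symmetry of $\Psi_{1}(\X)$ comes from Equation \eqref{AdjointPsiOne}.

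For $\Psi_{2}$, I would avoid redoing a quadratic-form computation and instead use the spectral decomposition of Remark \ref{RemarkSpectral}, which also holds for semidefinite matrices with $\lambda_{i} \geq 0$. Write $\D - \C = \U\Lambda\U^{*}$ with $\U \in \Sp(m)$ and $\Lambda$ real diagonal with nonnegative entries. Then
\begin{equation*}
\Psi_{2}(\D - \C) = \Psi_{2}(\U)\Psi_{2}(\Lambda)\Psi_{2}(\U)^{*}\,,
\end{equation*}
because $\Psi_{2}$ is multiplicative and $\Psi_{2}(\U^{*}) = \Psi_{2}(\U)^{*}$. Here $\Psi_{2}(\Lambda) = \diag(\Lambda, \Lambda)$ is a nonnegative real diagonal matrix, so it is positive semidefinite. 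A congruence of a positive semidefinite matrix remains positive semidefinite, which gives the claim. The same argument works verbatim for $\Psi_{1}$ as an alternative.

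The only delicate point is justifying that the spectral decomposition of Remark \ref{RemarkSpectral} holds for $\mathscr{P}^{0}_{m}(\mathbb{H})$ and not just for $\mathscr{P}_{m}(\mathbb{H})$. I expect this to be the main obstacle, though a minor one. If I prefer not to rely on it, I would use a continuity argument: for $\varepsilon > 0$, $\D - \C + \varepsilon\Id_{m} \in \mathscr{P}_{m}(\mathbb{H})$, so by Proposition \ref{PropositionHtoC} its image $\Psi_{2}(\D - \C) + \varepsilon\Id_{2m}$ lies in $\mathscr{P}_{2m}(\mathbb{C})$. Letting $\varepsilon \to 0$ and using that $\mathscr{P}^{0}_{2m}(\mathbb{C})$ is closed gives $\Psi_{2}(\D - \C) \succeq 0$. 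This limiting argument handles both embeddings uniformly, relying only on Propositions \ref{PropositionCtoR} and \ref{PropositionHtoC}.
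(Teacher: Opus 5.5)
Your proposal is correct and follows the paper's proof: linearity of $\Psi_{1}$ and $\Psi_{2}$ reduces the claim to $\Psi_{1}(\mathscr{P}^{0}_{n}(\mathbb{C})) \subseteq \mathscr{P}^{0}_{2n}(\mathbb{R})$ and $\Psi_{2}(\mathscr{P}^{0}_{m}(\mathbb{H})) \subseteq \mathscr{P}^{0}_{2m}(\mathbb{C})$. Your $\varepsilon$-perturbation argument (or the spectral one) is a worthwhile addition, because the paper cites Proposition~\ref{PropositionCtoR}, which is about definite matrices only, for the merely semidefinite difference $\B - \A$, and calls the quaternionic case ``similar'' without further detail.
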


\begin{proof}

We prove the statement for $\Psi_{1}$. The proof for $\Psi_{2}$ is analogous. Let $\A\,, \B \in \mathscr{P}_{n}(\mathbb{C})$ and assume that $\A \preceq \B$. Then $\B - \A \in \mathscr{P}^{0}_{n}(\mathbb{C})$. Since $\Psi_{1}$ is linear, we have
\begin{equation*}
\Psi_{1}(\B)-\Psi_{1}(\A) = \Psi_{1}(\B-\A).
\end{equation*}
From Proposition \ref{PropositionCtoR}, it follows that
\begin{equation*}
\Psi_{1}(\B-\A) \geq 0\,.
\end{equation*}
Therefore
\begin{equation*}
\Psi_{1}(\A) \preceq \Psi_{1}(\B)\,.
\end{equation*}
The proof for $\Psi_{2}$ is similar\,.

\end{proof}

\begin{remark}

In \cite{GRADUATE}, the authors defined the notion of Kubo-Ando means for $\J$-Hermitian matrices on $\mathscr{P}_{\J}(\mathbb{D})$, with $\J = \Id_{p, q}$ (see \cite{FRANCOMERINO} for the definition of the cone of $\J$-Hermitian matrices). Using similar techniques, one can show that there exists a one-to-one correspondence between Kubo-Ando means on $\mathscr{P}_{\Id_{p,q}}(\mathbb{C}) \leftrightarrow \mathscr{P}_{\Id_{2p,2q}}(\mathbb{R})$ and $\mathscr{P}_{\Id_{p,q}}(\mathbb{H}) \leftrightarrow \mathscr{P}_{\Id_{2p,2q}}(\mathbb{C})$\,.
 
\end{remark}

\section{The embeddings $\Psi_{1}$ and $\Psi_{2}$ as geometric isometries}

We start this section by showing that the embeddings
\begin{equation*}
\Psi_{1}: \mathscr{P}_{n}(\mathbb{C}) \longrightarrow \mathscr{P}_{2n}(\mathbb{R})\,, \qquad \Psi_{2}: \mathscr{P}_{m}(\mathbb{H}) \longrightarrow \mathscr{P}_{2m}(\mathbb{C})
\end{equation*}
preserves the Log-Euclidean geometry.

\noindent We recall that the Frobenius norm on $\Mat_{n}(\mathbb{D})$ is defined by
\begin{equation}
\left\|\X\right\|_{\mathbb{D}} := \sqrt{\tr(\X^{*}\X)}\,, \qquad \left(\X \in \Mat_{n}(\mathbb{D})\right)\,.
\label{FrobeniusNormD}
\end{equation}

\begin{remark}

In the case $\mathbb{D} = \mathbb{H}$, we usually work with the reduced trace $\trd$, instead of the standard trace. However, for a matrix $\X \in \mathfrak{p}_{n}(\mathbb{H})$, we have 
\begin{equation*}
\trd(\X) = \tr(\X)\,.
\end{equation*}
Using that $\X^{*}\X \in \mathfrak{p}_{n}(\mathbb{H})$ for all $\X \in \Mat_{n}(\mathbb{H})$, it follows that for all $\X \in \Mat_{n}(\mathbb{H})$, we get
\begin{equation*}
\sqrt{\tr(\X^{*}\X)} = \sqrt{\trd(\X^{*}\X)}\,.
\end{equation*}

\end{remark}

\begin{lemma}

For all $\X \in \Mat_{n}(\mathbb{C})$ and $\Y \in \Mat_{m}(\mathbb{H})$, we have
\begin{equation*}
\left\|\Psi_{1}(\X)\right\|_{\mathbb{R}} = \sqrt{2}\left\|\X\right\|_{\mathbb{C}}\,, \qquad \left\|\Psi_{2}(\Y)\right\|_{\mathbb{C}} = \sqrt{2}\left\|\Y\right\|_{\mathbb{H}}\,.
\end{equation*}

\label{PsiOneNorm}

\end{lemma}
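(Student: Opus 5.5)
Since both sides are nonnegative, it suffices to compare squares, i.e. to show $\tr\left(\Psi_{1}(\X)^{t}\Psi_{1}(\X)\right) = 2\tr(\X^{*}\X)$ and $\tr\left(\Psi_{2}(\Y)^{*}\Psi_{2}(\Y)\right) = 2\tr(\Y^{*}\Y)$. The plan is to use that $\Psi_{1}$ and $\Psi_{2}$ are algebra homomorphisms compatible with adjoints. By \eqref{AdjointPsiOne} we get $\Psi_{1}(\X)^{t}\Psi_{1}(\X) = \Psi_{1}(\X^{*})\Psi_{1}(\X) = \Psi_{1}(\X^{*}\X)$, and likewise $\Psi_{2}(\Y)^{*}\Psi_{2}(\Y) = \Psi_{2}(\Y^{*}\Y)$. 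This reduces the lemma to a trace identity for $\Psi_{1}$ and $\Psi_{2}$ on Hermitian matrices.

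For $\Psi_{1}$, write $\Z = \X^{*}\X = \A + i\B$ with $\A,\B \in \Mat_{n}(\mathbb{R})$. Then $\Psi_{1}(\Z) = \begin{pmatrix} \A & \B \\ -\B & \A \end{pmatrix}$, whose trace is $2\tr(\A)$. Since $\Z$ is Hermitian, $\B^{t} = -\B$, so the diagonal of $\B$ vanishes and $\tr(\Z) = \tr(\A)$. Hence $\tr(\Psi_{1}(\Z)) = 2\tr(\Z)$. (In general one has $\tr\Psi_{1}(\Z) = 2\,\mathrm{Re}\,\tr(\Z)$, and $\tr(\Z)$ is real here.)

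For $\Psi_{2}$, write $\W = \Y^{*}\Y = \Z_{1} + \Z_{2}j$. Then $\tr(\Psi_{2}(\W)) = \tr(\Z_{1}) + \tr(\overline{\Z_{1}}) = 2\,\mathrm{Re}\,\tr(\Z_{1})$. Because $\W \in \mathfrak{p}_{m}(\mathbb{H})$, its diagonal entries are real quaternions. Their $j$-parts vanish and their $\mathbb{C}$-parts are real, so $\tr(\W) = \tr(\Z_{1}) \in \mathbb{R}$, and therefore $\tr(\Psi_{2}(\W)) = 2\tr(\W)$. By the preceding remark, $\tr$ and $\trd$ agree on $\mathfrak{p}_{m}(\mathbb{H})$, so there is no ambiguity in the quaternionic trace. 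Taking square roots gives both equalities.

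The only delicate point is the quaternionic trace convention, namely justifying that the diagonal of a quaternionic Hermitian matrix is real. This follows from $w_{ii} = \overline{w_{ii}}$ together with the decomposition \eqref{EquationQuaternions}. As a sanity check, one could also verify the result entrywise: each complex entry $a+ib$ contributes $2(a^{2}+b^{2})$ to $\|\Psi_{1}(\X)\|_{\mathbb{R}}^{2}$, and each quaternion $z_{1}+z_{2}j$ contributes $2(|z_{1}|^{2}+|z_{2}|^{2}) = 2|q|^{2}$ to $\|\Psi_{2}(\Y)\|_{\mathbb{C}}^{2}$.
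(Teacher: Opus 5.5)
Your proof is correct and is essentially the paper's argument. The paper multiplies out $\Psi_{1}(\X)^{t}\Psi_{1}(\X)$ in blocks, and the result is exactly $\Psi_{1}(\X^{*}\X)$. It then uses that the imaginary part $\A^{t}\B-\B^{t}\A$ of $\X^{*}\X$ is skew-symmetric, hence traceless, which is precisely your observation that the imaginary part of the Hermitian matrix $\X^{*}\X$ is skew-symmetric.

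Routing the computation through the $*$-homomorphism property is slightly cleaner than the block multiplication. It also lets you write out the quaternionic case explicitly, which the paper only calls similar. There, the fact that a quaternionic Hermitian matrix has real diagonal entries is exactly the right justification.
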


\begin{proof}

We prove the equality for $\Psi_{1}$, the proof for $\Psi_{2}$ is similar. Let $\X = \A+i\B$, with $\A\,, \B \in \Mat_{n}(\mathbb{R})$. Then
\begin{equation*}
\Psi_{1}(\X) = \begin{pmatrix} \A & \B \\ -\B & \A \end{pmatrix} \qquad \qquad \text{ and } \Psi_{1}(\X)^{t} = \begin{pmatrix} \A^{t} & -\B^{t} \\ \B^{t} & \A^{t} \end{pmatrix}\,,
\end{equation*}
i.e.
\begin{equation*}
\Psi_{1}(\X)^{t}\Psi_{1}(\X) = \begin{pmatrix} \A^{t}\A+\B^{t}\B & \A^{t}\B-\B^{t}\A \\ \B^{t}\A-\A^{t}\B & \B^{t}\B+\A^{t}\A \end{pmatrix}\,.
\end{equation*}
By taking the trace, we obtain 
\begin{equation*}
\left\|\Psi_{1}(\X)\right\|^{2}_{\mathbb{R}} = 2\tr(\A^{t}\A+\B^{t}\B)\,.
\end{equation*}
On the other hand,
\begin{equation*}
\X^{*}\X = \left(\A^{t}-i\B^{t}\right)\left(\A+i\B\right) = \A^{t}\A + \B^{t}\B + i\left(\A^{t}\B - \B^{t}\A\right)\,,
\end{equation*}
and using that $\A^{t}\B - \B^{t}\A$ is skew-symmetric, we get
\begin{equation*}
\tr(\X^{*}\X) = \tr(\A^{t}\A+\B^{t}\B)\,.
\end{equation*}
Thus
\begin{equation*}
\left\|\Psi_{1}(\X)\right\|^{2}_{\mathbb{R}} = 2\left\|\X\right\|^{2}_{\mathbb{C}}\,.
\end{equation*}

\end{proof}




\begin{definition}

The Log-Euclidean distance on $\mathscr{P}_{n}(\mathbb{D})$ is defined by
\begin{equation*}
\d_{\log,\mathbb{D}}(\A\,, \B) = \left\|\log_{\mathbb{D}}(\A) - \log_{\mathbb{D}}(\B)\right\|_{\mathbb{D}}\,, \qquad \left(\A\,, \B \in \mathscr{P}_{n}(\mathbb{D})\right)\,.
\end{equation*}

\end{definition}

\begin{remark}

The sets $\widetilde{\mathscr{P}}_{n}(\mathbb{C})$ and $\widetilde{\mathscr{P}}_{m}(\mathbb{H})$ are closed submanifolds of $\widetilde{\mathscr{P}}_{2n}(\mathbb{R})$ and $\widetilde{\mathscr{P}}_{2m}(\mathbb{C})$ respectively. We denote by $\tilde{\d}_{\log,\mathbb{R}}$ and $\tilde{\d}_{\log,\mathbb{C}}$ the maps
\begin{equation*}
\tilde{\d}_{\log,\mathbb{R}}: \widetilde{\mathscr{P}}_{n}(\mathbb{C}) \times \widetilde{\mathscr{P}}_{n}(\mathbb{C}) \mapsto \widetilde{\mathscr{P}}_{n}(\mathbb{C})\,, \qquad \tilde{\d}_{\log,\mathbb{C}}: \widetilde{\mathscr{P}}_{m}(\mathbb{H}) \times \widetilde{\mathscr{P}}_{m}(\mathbb{H}) \mapsto \widetilde{\mathscr{P}}_{m}(\mathbb{H})
\end{equation*}
the maps given by
\begin{equation*}
\tilde{\d}_{\log,\mathbb{R}} = \frac{1}{\sqrt{2}}(\d_{\log,\mathbb{R}})_{|_{\widetilde{\mathscr{P}}_{n}(\mathbb{C})}}\,, \qquad \tilde{\d}_{\log,\mathbb{C}} = \frac{1}{\sqrt{2}}(\d_{\log,\mathbb{C}})_{|_{\widetilde{\mathscr{P}}_{m}(\mathbb{H})}}\,.
\end{equation*}

\end{remark}

\begin{proposition}

The maps
\begin{equation*}
\Psi_{1}: \left(\mathscr{P}_{n}(\mathbb{C})\,, \d_{\log,\mathbb{C}}\right) \longrightarrow \left(\widetilde{\mathscr{P}}_{n}(\mathbb{C}))\,, \tilde{\d}_{\log,\mathbb{R}}\right) 
\end{equation*}
and
\begin{equation*}
\Psi_{2}: \left(\mathscr{P}_{m}(\mathbb{H})\,, \d_{\log,\mathbb{H}}\right) \longrightarrow \left(\widetilde{\mathscr{P}}_{m}(\mathbb{H}))\,, \tilde{\d}_{\log,\mathbb{C}}\right)
\end{equation*}
are isometries. More precisely, for all $\A\,,\B \in \mathscr{P}_{n}(\mathbb{C})$ and $\C\,,\D \in \mathscr{P}_{n}(\mathbb{H})$, we have
\begin{equation*}
\tilde{\d}_{\log, \mathbb{R}}\left(\Psi_{1}(\A)\,, \Psi_{1}(\B)\right) = \d_{\log,\mathbb{C}}(\A\,, \B)\,, \qquad \tilde{\d}_{\log, \mathbb{C}}\left(\Psi_{2}(\C)\,, \Psi_{2}(\D)\right) = \d_{\log,\mathbb{H}}(\C\,, \D)\,.
\end{equation*}

\end{proposition}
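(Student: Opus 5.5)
The plan is to show that $\Psi_{1}$ and $\Psi_{2}$ intertwine the matrix logarithms, and then to combine this with linearity and the norm identity of Lemma \ref{PsiOneNorm}.

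The first step is to establish
\begin{equation*}
\log_{\mathbb{R}}(\Psi_{1}(\A)) = \Psi_{1}(\log_{\mathbb{C}}(\A))\,, \qquad \log_{\mathbb{C}}(\Psi_{2}(\C)) = \Psi_{2}(\log_{\mathbb{H}}(\C))\,,
\end{equation*}
for $\A \in \mathscr{P}_{n}(\mathbb{C})$ and $\C \in \mathscr{P}_{m}(\mathbb{H})$. Lemma \ref{LemmaFunctionalCalculus} is stated only for positive-valued $f$, and $\log$ is not positive-valued, so I will not apply the lemma directly. Instead I will reuse its proof, which never used positivity of $f$. Write $\A = \sum_{j}\lambda_{j}\P_{j}$ as the spectral decomposition. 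Then $\Psi_{1}(\A) = \sum_{j}\lambda_{j}\Psi_{1}(\P_{j})$ is the spectral decomposition of $\Psi_{1}(\A)$. It follows that $\log_{\mathbb{R}}(\Psi_{1}(\A)) = \sum_{j}\log(\lambda_{j})\Psi_{1}(\P_{j}) = \Psi_{1}(\log_{\mathbb{C}}(\A))$.

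An alternative route avoids spectral theory altogether. Since $\Psi_{1}$ is a continuous algebra morphism, it commutes with the exponential series: $\exp_{\mathbb{R}}(\Psi_{1}(\X)) = \Psi_{1}(\exp_{\mathbb{C}}(\X))$. Moreover $\Psi_{1}$ maps Hermitian matrices into $\mathfrak{p}_{2n}(\mathbb{R})$ by Equation \eqref{AdjointPsiOne}. Applying this to $\X = \log_{\mathbb{C}}(\A)$ and using that $\exp_{\mathbb{R}}$ is injective on $\mathfrak{p}_{2n}(\mathbb{R})$ gives the identity. The same argument works for $\Psi_{2}$, using $\Psi_{2}(\Z^{*}) = \Psi_{2}(\Z)^{*}$.

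With the identity in hand, I use linearity of $\Psi_{1}$:
\begin{equation*}
\log_{\mathbb{R}}(\Psi_{1}(\A)) - \log_{\mathbb{R}}(\Psi_{1}(\B)) = \Psi_{1}\left(\log_{\mathbb{C}}(\A) - \log_{\mathbb{C}}(\B)\right)\,.
\end{equation*}
Lemma \ref{PsiOneNorm} then gives $\d_{\log,\mathbb{R}}(\Psi_{1}(\A)\,, \Psi_{1}(\B)) = \sqrt{2}\,\d_{\log,\mathbb{C}}(\A\,, \B)$. Dividing by $\sqrt{2}$ yields the claimed equality for $\tilde{\d}_{\log,\mathbb{R}}$. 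The argument for $\Psi_{2}$ is identical.

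Finally, since $\Psi_{1}$ and $\Psi_{2}$ are injective and map onto $\widetilde{\mathscr{P}}_{n}(\mathbb{C})$ and $\widetilde{\mathscr{P}}_{m}(\mathbb{H})$ by definition, they are bijective isometries onto their images. The only mildly delicate step is the logarithm intertwining, since it needs the extension beyond positive-valued $f$; the exponential-series argument handles this cleanly.
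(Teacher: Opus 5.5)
Your proof is correct and follows the paper's argument: you intertwine the logarithms through $\Psi_{1}$ (resp.\ $\Psi_{2}$) via functional calculus, use linearity, and apply Lemma \ref{PsiOneNorm} to get the factor $\sqrt{2}$. The paper simply applies Lemma \ref{LemmaFunctionalCalculus} to $f=\log$ despite that lemma's positivity hypothesis. Your observation that the lemma's proof (or your exponential-series argument) still covers $\log$ is therefore a legitimate tightening, not a different route.
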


\begin{proof}

By the compatibility of functional calculus with $\Psi_{1}$ (see Lemma \ref{LemmaFunctionalCalculus}), that we apply to the function $f(x) = \log(x)$, we have
\begin{equation*}
\log_{\mathbb{R}}(\Psi_{1}(\A)) = \Psi_{1}(\log_{\mathbb{C}}(\A))\,,
\end{equation*}
and similarly
\begin{equation*}
\log_{\mathbb{R}}(\Psi_{1}(\B)) = \Psi_{1}(\log_{\mathbb{C}}(\B))\,.
\end{equation*}
Therefore
\begin{equation*}
\log_{\mathbb{R}}(\Psi_{1}(\A)) - \log_{\mathbb{R}}(\Psi_{1}(\B)) = \Psi_{1}\left(\log_{\mathbb{C}}(\A) - \log_{\mathbb{C}}(\B)\right)\,.
\end{equation*}
Using Lemma \ref{PsiOneNorm}, we obtain
\begin{eqnarray*}
& & \tilde{\d}_{\log,\mathbb{R}}\left(\Psi_{1}(\A),\Psi_{1}(\B)\right)^{2} = \frac{1}{2} \left\|\Psi_{1}\left(\log_{\mathbb{C}}(\A) - \log_{\mathbb{C}}(\B)\right)\right\|^{2}_{\mathbb{R}} \\
& = & \left\|\log_{\mathbb{C}}(\A)-\log_{\mathbb{C}}(\B)\right\|^{2}_{\mathbb{C}} = \d_{\log,\mathbb{C}}(\A\,, \B)^{2}\,.
\end{eqnarray*}
Taking square roots gives the result for $\Psi_{1}$. The proof for $\Psi_{2}$ is similar\,.

\end{proof}

\begin{definition}

Let $\A_{1}\,, \ldots\,, \A_{k} \in \mathscr{P}_{n}(\mathbb{D})$. Their Log-Euclidean barycentre is defined by
\begin{equation*}
\Bar_{\log,\mathbb{D}}(\A_{1}\,, \ldots\,, \A_{k}) = \exp_{\mathbb{D}}\left(\frac{1}{k}\sum\limits_{j=1}^{k}\log_{\mathbb{D}}(\A_{j})\right)\,.
\end{equation*}
More generally, if $w_{1}\,, \ldots\,, w_{k} > 0$ with $\sum\limits_{j=1}^{k}w_{j}=1$, the weighted Log-Euclidean barycentre is defined by
\begin{equation*}
\Bar^{w}_{\log,\mathbb{D}}(\A_{1}\,, \ldots\,, \A_{k}) = \exp_{\mathbb{D}}\left(\sum\limits_{j=1}^{k}w_{j}\log_{\mathbb{D}}(\A_{j})\right)\,.
\end{equation*}

\end{definition}

\begin{corollary}

Let $\A_{1}\,, \ldots\,, \A_{k} \in \mathscr{P}_{n}(\mathbb{C})\,,$ $\B_{1}\,, \ldots\,, \B_{k} \in \mathscr{P}_{m}(\mathbb{H})\,,$ and $w_{1}\,, \ldots\,, w_{k}$ positive satisfying $\sum\limits_{j=1}^{k}w_{j}=1$. Then
\begin{equation*}
\Psi_{1}\left(\Bar^{w}_{\log,\mathbb{C}}(\A_{1}\,, \ldots\,, \A_{k})\right) = \Bar^{w}_{\log,\mathbb{R}}\left(\Psi_{1}(\A_{1})\,, \ldots\,, \Psi_{1}(\A_{k})\right)
\end{equation*}
and 
\begin{equation*}
\Psi_{2}\left(\Bar^{w}_{\log,\mathbb{H}}(\B_{1}\,, \ldots\,, \B_{k})\right) = \Bar^{w}_{\log,\mathbb{C}}\left(\Psi_{2}(\B_{1})\,, \ldots\,, \Psi_{2}(\B_{k})\right)\,.
\end{equation*}

\end{corollary}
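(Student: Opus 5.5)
The plan is to push $\Psi_{1}$ through each operation in the definition of the barycentre: first $\log$, then the weighted sum, then $\exp$. The case of $\Psi_{2}$ is handled by the same argument word for word.

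\emph{Step 1 (logarithms).} By Lemma \ref{LemmaFunctionalCalculus}, applied to $f = \log$ exactly as in the proof of the preceding isometry proposition, we have
\begin{equation*}
\log_{\mathbb{R}}(\Psi_{1}(\A_{j})) = \Psi_{1}(\log_{\mathbb{C}}(\A_{j}))
\end{equation*}
for each $j$. Strictly speaking, $\log$ does not map $(0\,,+\infty)$ into $(0\,,+\infty)$. However, the proof of Lemma \ref{LemmaFunctionalCalculus} only uses the spectral decomposition and the homomorphism property of $\Psi_{1}$, so it applies verbatim to any real-valued function. The paper has already used it in this way for $\log$.

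\emph{Step 2 (weighted sum).} Since $\Psi_{1}$ is $\mathbb{R}$-linear and the weights $w_{j}$ are real,
\begin{equation*}
\sum_{j} w_{j}\log_{\mathbb{R}}(\Psi_{1}(\A_{j})) = \Psi_{1}\Big(\sum_{j} w_{j}\log_{\mathbb{C}}(\A_{j})\Big) = \Psi_{1}(\Y),
\end{equation*}
where $\Y := \sum_{j} w_{j}\log_{\mathbb{C}}(\A_{j})$. Note that $\Y$ lies in $\mathfrak{p}_{n}(\mathbb{C})$.

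\emph{Step 3 (exponential).} It remains to show that $\exp_{\mathbb{R}}(\Psi_{1}(\Y)) = \Psi_{1}(\exp_{\mathbb{C}}(\Y))$. There are two ways to do this.
\begin{itemize}
\item Since $\Psi_{1}$ is an algebra homomorphism, $\Psi_{1}(\Y^{k}) = \Psi_{1}(\Y)^{k}$. Because $\Psi_{1}$ is linear on a finite-dimensional space, it is continuous, so it commutes with the convergent power series defining $\exp$.
\item Alternatively, write $\Y = \sum_{j}\mu_{j}\P_{j}$ spectrally. As in Lemma \ref{LemmaFunctionalCalculus}, $\Psi_{1}(\Y) = \sum_{j}\mu_{j}\Psi_{1}(\P_{j})$ is a spectral decomposition, and both exponentials equal $\sum_{j} e^{\mu_{j}}\Psi_{1}(\P_{j})$.
\end{itemize}
Combining the three steps gives the first identity. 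The second identity follows in the same way, using $\Psi_{2}$, its $\mathbb{R}$-linearity, its multiplicativity, and the $\Psi_{2}$ part of Lemma \ref{LemmaFunctionalCalculus}.

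\emph{Main obstacle.} There is no real difficulty. The only point needing care is the justification for applying the functional calculus lemma to $\log$ and $\exp$, which are not maps from $(0\,,+\infty)$ to $(0\,,+\infty)$ on Hermitian, non-positive arguments. This is handled by the remark in Step 1 or by the power-series argument in Step 3.
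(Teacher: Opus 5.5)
Your proposal is correct and takes essentially the same route as the paper. The paper chains $\Psi_{1}\circ\exp_{\mathbb{C}} = \exp_{\mathbb{R}}\circ\Psi_{1}$, the $\mathbb{R}$-linearity of $\Psi_{1}$, and $\log_{\mathbb{R}}\circ\Psi_{1} = \Psi_{1}\circ\log_{\mathbb{C}}$ (in the reverse order to yours, and likewise for $\Psi_{2}$), and your extra justifications — that the proof of Lemma \ref{LemmaFunctionalCalculus} works for any real-valued $f$, and the power-series argument for $\exp$ on $\mathfrak{p}_{n}(\mathbb{C})$ — fill in what the paper leaves implicit when it cites ``compatibility with the exponential and logarithm maps.''
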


\begin{proof}

Using the compatibility of $\Psi_{1}$ with the exponential and logarithm maps, we obtain
\begin{eqnarray*}
& & \Psi_{1}\left(\Bar^{w}_{\log,\mathbb{C}}(\A_{1}\,, \ldots\,, \A_{k})\right) = \Psi_{1}\left(\exp_{\mathbb{C}}\left(\sum\limits_{j=1}^{k}w_{j}\log_{\mathbb{C}}(\A_{j})\right)\right) \\
& = & \exp_{\mathbb{R}}\left(\Psi_{1}\left(\sum\limits_{j=1}^{k}w_{j}\log_{\mathbb{C}}(\A_{j})\right)\right) = \exp_{\mathbb{R}}\left(\sum\limits_{j=1}^{k}w_{j}\Psi_{1}(\log_{\mathbb{C}}(\A_{j}))\right) \\
& = & \exp_{\mathbb{R}}\left(\sum\limits_{j=1}^{k}w_{j}\log_{\mathbb{R}}(\Psi_{1}(\A_{j}))\right) = \Bar_{\log,\mathbb{R}}^{w}\left(\Psi_{1}(\A_{1})\,, \ldots\,,\Psi_{1}(\A_{k})\right)\,.
\end{eqnarray*}
The proof for $\Psi_{2}$ is similar\,.

\end{proof}

\section{Kubo-Ando means on $\mathscr{P}_{2}(\mathbb{D})$ and compatibility with the maps $\Psi_{i}$}

In this section, we show that every Kubo-Ando mean on $\mathscr{P}_{2}(\mathbb{D})$ admits an explicit affine expression in terms of the two matrices involved. The proof a simple application of the functional calculus together with the Cayley-Hamilton theorem (see also \cite[Section~1]{HIGHAM}). We then explain how this result extends immediately to the embedded cone $\widetilde{\mathscr{P}}_{2}(\mathbb{H}) \subset \mathscr{P}_{4}(\mathbb{C})$ and $\widetilde{\mathscr{P}}_{2}(\mathbb{C}) \subset \mathscr{P}_{4}(\mathbb{R})$\,.

\begin{lemma}

Let $\X \in \mathscr{P}_{2}(\mathbb{D})$, and let $f: \left(0\,, \infty\right) \to \left(0\,, \infty\right)$ be a continuous function. Then there exists unique scalars $\alpha_{f}(\X)\,, \beta_{f}(\X)\in \mathbb{R}$ such that
\begin{equation*}
f(\X) = \alpha_{f}(\X)\Id + \beta_{f}(\X)\X\,.
\end{equation*}

\label{LemmaHigham}

\end{lemma}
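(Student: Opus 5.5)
The plan is to reduce everything to the diagonal case via the spectral decomposition of Remark \ref{RemarkSpectral}, and then to use Lagrange interpolation on the (at most two) eigenvalues of $\X$. Write $\X = \U\Lambda\U^{*}$ with $\U \in \U_{\mathbb{D}}$ and $\Lambda = \diag(\lambda_{1}\,, \lambda_{2})$, $\lambda_{1}\,, \lambda_{2} > 0$. By definition $f(\X) = \U f(\Lambda)\U^{*}$. Since $\alpha\Id + \beta\X = \U\left(\alpha\Id + \beta\Lambda\right)\U^{*}$ and $\alpha\,, \beta$ are real, hence central in $\mathbb{D}$ and commuting with $\U$, the identity $f(\X) = \alpha\Id + \beta\X$ is equivalent to the diagonal identity $f(\Lambda) = \alpha\Id + \beta\Lambda$. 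This in turn is equivalent to the scalar system
\begin{equation*}
\alpha + \beta\lambda_{1} = f(\lambda_{1})\,, \qquad \alpha + \beta\lambda_{2} = f(\lambda_{2})\,.
\end{equation*}
This reduction works verbatim for $\mathbb{D} = \mathbb{H}$, because it only uses that the $\lambda_{i}$ are real right eigenvalues and that real scalars commute with quaternionic matrices.

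If $\lambda_{1} \neq \lambda_{2}$, the system is a $2 \times 2$ Vandermonde system with nonzero determinant $\lambda_{2} - \lambda_{1}$. It therefore has the unique solution
\begin{equation*}
\beta_{f}(\X) = \frac{f(\lambda_{1}) - f(\lambda_{2})}{\lambda_{1} - \lambda_{2}}\,, \qquad \alpha_{f}(\X) = \frac{\lambda_{1}f(\lambda_{2}) - \lambda_{2}f(\lambda_{1})}{\lambda_{1} - \lambda_{2}}\,,
\end{equation*}
which is the Lagrange interpolation polynomial of $f$ at the spectrum. These expressions are symmetric in $(\lambda_{1}\,, \lambda_{2})$, so they depend only on $\Spec(\X)$, that is, on $\tr(\X)$ and $\det(\X)$ (or, for $\mathbb{H}$, on $\trd$ and the Moore determinant). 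In particular they do not depend on the choice of $\U$.

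Uniqueness follows from the Cayley--Hamilton viewpoint. If $\alpha\Id + \beta\X = \alpha'\Id + \beta'\X$ with $\beta \neq \beta'$, then $\X$ would be a real scalar matrix, contradicting $\lambda_{1} \neq \lambda_{2}$. Hence $\beta = \beta'$, and then $\alpha = \alpha'$.

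The main obstacle is the degenerate case $\lambda_{1} = \lambda_{2} = \lambda$, i.e. $\X = \lambda\Id$. Existence is trivial there, since $f(\X) = f(\lambda)\Id$, but uniqueness genuinely fails: every pair with $\alpha + \beta\lambda = f(\lambda)$ works. I would handle this by stating uniqueness under the hypothesis $\lambda_{1} \neq \lambda_{2}$, and in the scalar case fixing a canonical choice. When $f$ is differentiable, the natural choice is the Hermite interpolation $\beta_{f}(\X) = f'(\lambda)$ and $\alpha_{f}(\X) = f(\lambda) - \lambda f'(\lambda)$. This is the limit of the divided-difference formulas above as $\lambda_{2} \to \lambda_{1}$, and it applies in particular to operator monotone $f$, which are smooth. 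In the merely continuous case one can simply take $\beta = 0$ and $\alpha = f(\lambda)$. With this convention the coefficients remain functions of the trace and determinant alone, which is what the later affine formula for $\A\sigma\B$ in Proposition \ref{DecompositionMeanAlphaBeta} requires.
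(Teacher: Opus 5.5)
Your proposal is correct and follows essentially the same route as the paper. Both diagonalize $\X = \U\Lambda\U^{*}$, interpolate $f$ affinely at the two eigenvalues when $\lambda_{+} \neq \lambda_{-}$, and treat $\X = \lambda\Id_{2}$ separately; your observation that uniqueness fails in that scalar case is also right, since the paper's proof there merely \emph{chooses} $\alpha_{f}(\X) = f(\lambda_{+})$, $\beta_{f}(\X) = 0$ (the convention later used in Theorem~\ref{2x2KuboAndo}), so the uniqueness asserted in the lemma holds only when $\X$ is not a scalar matrix.
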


\begin{proof}

Let $\X \in \mathscr{P}_{2}(\mathbb{D})$. There exists $\U \in \U_{\mathbb{D}}$ and $\Lambda = \diag(\lambda_{+}\,, \lambda_{-})$ such that $\X = \U\Lambda\U^{*}$, and where $\lambda_{+}\,, \lambda_{-} > 0$ are the eigenvalues of $\X$. By the functional calculus, we get
\begin{equation*}
f(\X) = \U \begin{pmatrix} f(\lambda_{+}) & 0 \\ 0 & f(\lambda_{-}) \end{pmatrix} \U^{*}\,.
\end{equation*}
Assume first that $\lambda_{+} \neq \lambda_{-}$. Let
\begin{equation*}
r(t) = \alpha_{f}(\X) + \beta_{f}(\X) t\,, \qquad \left(t \in \mathbb{R}\right)\,,
\end{equation*}
be the unique affine polynomial satisfying
\begin{equation*}
r(\lambda_{+}) = f(\lambda_{+})\,, \qquad r(\lambda_{-}) = f(\lambda_{-})\,.
\end{equation*}
In particular, we get
\begin{equation*}
\begin{cases} \alpha_{f}(\X) + \beta_{f}(\X) \lambda_{+} & = f(\lambda_{+}) \\ \alpha_{f}(\X) + \beta_{f}(\X) \lambda_{-} & = f(\lambda_{-}) \end{cases}
\end{equation*}
i.e.
\begin{equation*}
\beta_{f}(\X) = \frac{f(\lambda_{+}) - f(\lambda_{-})}{\lambda_{+}-\lambda_{-}}\,, \qquad \alpha_{f}(\X) = \frac{\lambda_{+}f(\lambda_{-}) - \lambda_{-}f(\lambda_{+})}{\lambda_{+}-\lambda_{-}}\,.
\end{equation*}
Since
\begin{equation*}
r(\X) = \U \begin{pmatrix} r(\lambda_{+}) & 0 \\ 0 & r(\lambda_{-}) \end{pmatrix} \U^{*}\,,
\end{equation*}
we obtain
\begin{equation*}
r(\X) = \U \begin{pmatrix} f(\lambda_{+}) & 0 \\ 0 & f(\lambda_{-}) \end{pmatrix} \U^{*} = f(\X)\,,
\end{equation*}
i.e.
\begin{equation*}
f(\X) = \alpha_{f}(\X) \Id_{2} + \beta_{f}(\X) \X\,.
\end{equation*}
Now assume that $\lambda_{+} = \lambda_{-}$. Since $\X$ is Hermitian, we get that $\X = \lambda_{+} \Id_{2}$. Hence, we choose $\alpha_{f}(\X) = f(\lambda_{+})$ and $\beta_{f}(\X) = 0$, and we get $f(\X) = \alpha_{f}(\X) \Id_{2} + \beta_{f}(\X) \X$\,.

\end{proof}

\begin{remark}

The preceding lemma may also be viewed as a consequence of the Cayley-Hamilton theorem. Indeed, every matrix $\X \in \Mat_{2}(\mathbb{C})$ satisfies
\begin{equation*}
\X^{2} - \tr(\X)\X + \det(\X)\Id_{2} = 0\,.
\end{equation*}
Thus the algebra generated by $\X$ is at most two-dimensional (i.e. $\Span\left\{\Id_{2}\,, \X\right\}$). Consequently, every matrix function of $\X$ belongs to the same two-dimensional algebra\,.

\end{remark}

\noindent We now apply Lemma \ref{LemmaHigham} to Kubo-Ando means\,.

\begin{theo}

Let $\sigma$ be a Kubo-Ando mean with representing function $f: \left(0\,, \infty\right) \mapsto \left(0\,, \infty\right)$. For all $\A\,, \B \in \mathscr{P}_{2}(\mathbb{D})$, let $\X := \A^{-\frac{1}{2}}\B\A^{-\frac{1}{2}}$. Then there exist unique scalars $\alpha_{f}(\X)\,, \beta_{f}(\X) \in \mathbb{R}$ such that
\begin{equation*}
\A \sigma \B = \alpha_{f}(\X)\A + \beta_{f}(\X)\B\,.
\end{equation*}
More precisely, if $\Spec(\X) = \left\{\lambda_{+}\,, \lambda_{-}\right\}$ are the eigenvalues of $\X$. Then
\begin{equation*}
\alpha_{f}(\X) = \begin{cases} \frac{\lambda_{+}f(\lambda_{-}) - \lambda_{-}f(\lambda_{+})}{\lambda_{+} - \lambda_{-}} & \text{ if } \lambda_{+} \neq \lambda_{-} \\ f(\lambda_{+}) & \text{ otherwise } \end{cases} \qquad \beta_{f}(\X) = \begin{cases} \frac{f(\lambda_{+})-f(\lambda_{-})}{\lambda_{+}-\lambda_{-}} & \text{ if } \lambda_{+} \neq \lambda_{-} \\ 0 & \text{ otherwise }\end{cases} 
\end{equation*}

\label{2x2KuboAndo}

\end{theo}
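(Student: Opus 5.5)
The plan is to combine the Kubo-Ando representation of Theorem \ref{TheoremKuboAndo} with Lemma \ref{LemmaHigham}, applied to the matrix $\X = \A^{-\frac{1}{2}}\B\A^{-\frac{1}{2}}$.

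First I check that $\X$ lies in $\mathscr{P}_{2}(\mathbb{D})$. The matrix $\A^{-\frac{1}{2}}$ is Hermitian and invertible, so $\X$ is a congruence of $\B$ and hence Hermitian. For nonzero $x$ we have $x^{*}\X x = y^{*}\B y > 0$ with $y = \A^{-\frac{1}{2}}x \neq 0$. The representing function $f$ is positive and continuous, since operator monotone functions are continuous. Lemma \ref{LemmaHigham} therefore applies and gives
\begin{equation*}
f(\X) = \alpha_{f}(\X)\Id_{2} + \beta_{f}(\X)\X,
\end{equation*}
with exactly the coefficients written in the statement. These coefficients come from the two cases $\lambda_{+} \neq \lambda_{-}$ (affine interpolation) and $\lambda_{+} = \lambda_{-}$ (where $\X$ is scalar).

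Next I substitute this into Equation \eqref{EquationKuboAndoSigma} and use that the coefficients are real, so they commute with quaternionic matrices:
\begin{equation*}
\A\sigma\B = \A^{\frac{1}{2}}\left(\alpha_{f}(\X)\Id_{2} + \beta_{f}(\X)\A^{-\frac{1}{2}}\B\A^{-\frac{1}{2}}\right)\A^{\frac{1}{2}} = \alpha_{f}(\X)\A + \beta_{f}(\X)\B.
\end{equation*}
Here I use $\A^{\frac{1}{2}}\A^{\frac{1}{2}} = \A$ and $\A^{\frac{1}{2}}\A^{-\frac{1}{2}} = \Id_{2}$. Both identities follow from the definition $\X^{t}_{\mathbb{D}} = \exp_{\mathbb{D}}(t\log_{\mathbb{D}}\X)$, because exponentials of commuting multiples of the same matrix multiply.

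The main obstacle is uniqueness, and it needs care. If $\A$ and $\B$ are linearly independent, the coefficients are determined by $\A\sigma\B$, since $\{\A, \B\}$ is a basis of their span. If $\B = c\A$, then $\X = c\Id_{2}$ and so $\lambda_{+} = \lambda_{-} = c$. In that case the literal decomposition $\alpha\A + \beta\B$ is not unique. I would therefore read ``unique'' as follows: the pair is uniquely specified by the formulas above, equivalently by uniqueness in Lemma \ref{LemmaHigham}. Uniqueness in that lemma holds when $\Id_{2}$ and $\X$ are independent, and by the stated convention $\beta_{f}(\X) = 0$ in the scalar case. I would state this caveat explicitly and otherwise transfer uniqueness by applying the congruence $\A^{-\frac{1}{2}}(\cdot)\A^{-\frac{1}{2}}$, which turns the identity into $f(\X) = \alpha\Id_{2} + \beta\X$.
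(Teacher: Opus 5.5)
Your proposal is correct and follows the paper's proof: you substitute $f(\X) = \alpha_{f}(\X)\Id_{2} + \beta_{f}(\X)\X$ from Lemma \ref{LemmaHigham} into the Kubo-Ando formula and conjugate by $\A^{\frac{1}{2}}$. Your uniqueness caveat is also right and goes beyond the paper, which asserts uniqueness without comment even though, when $\B = c\A$ (equivalently $\lambda_{+} = \lambda_{-}$), the pair $(\alpha_{f}(\X), \beta_{f}(\X))$ is pinned down only by the convention $\beta_{f}(\X) = 0$.
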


\begin{proof}

Using Theorem \ref{TheoremKuboAndo}, we have
\begin{equation*}
\A \sigma \B = \A^{\frac{1}{2}} f(\X) \A^{\frac{1}{2}}\,.
\end{equation*}
It follows from Lemma \ref{LemmaHigham} that
\begin{equation*}
f(\X) = \alpha_{f}(\X)\Id_{2} + \beta_{f}(\X)\X\,.
\end{equation*}
Therefore
\begin{equation*}
\A \sigma \B = \A^{\frac{1}{2}}\left(\alpha(\X)\Id+\beta(\X)\X\right)\A^{\frac{1}{2}}\,.
\end{equation*}
Using $\A^{\frac{1}{2}}\Id_{2}\A^{\frac{1}{2}} = \A$ and $\A^{\frac{1}{2}}\X\A^{\frac{1}{2}} = \B$, we obtain
\begin{equation*}
\A \sigma \B = \alpha_{f}(\X)\A+\beta_{f}(\X)\B\,.
\end{equation*}

\end{proof}

\begin{remark}

The coefficients appearing in Theorem~\ref{2x2KuboAndo} depend only on the eigenvalues of $\X$ and the function $f$ itself. Note that the eigenvalues of $\X$ only depends on $\tr(\X)$ and $\det(\X)$. Indeed, the characteristic polynomial $\P_{\X}$ of $\X$ is given by
\begin{equation*}
\P_{\X}(\lambda) = \lambda^{2} - \tr(\X)\lambda + \det(\X)\,, \qquad \left(\lambda \in \mathbb{R}\right)\,.
\end{equation*}
Therefore, we get
\begin{equation*}
\lambda_{\pm} = \frac{\tr(\X) \pm \sqrt{\tr(\X)^{2}-4\det(\X)}}{2}\,.
\end{equation*}
Note that if $\mathbb{D} = \mathbb{H}$, $\det(\X)$ corresponds to the Moore determinant of $\X$. Moreover, if $\X \in \mathscr{P}_{2}(\mathbb{H})$, we get that $\tr(\X) = \trd(\X)$, where $\trd$ denotes the reduced trace on $\Mat_{2}(\mathbb{D})$\,.

\end{remark}

\begin{example}

We denote by $\sharp$ the geometric mean on $\mathscr{P}_{2}(\mathbb{C})$ given by
\begin{equation*}
\A \sharp \B = \A^{\frac{1}{2}}\left(\A^{-\frac{1}{2}}\B\A^{-\frac{1}{2}}\right)^{\frac{1}{2}}\A^{\frac{1}{2}}\,,
\end{equation*}
i.e. the representing function is $f(x) = \sqrt{x}$. We assume that $\Spec(\X) = \left\{\lambda_{+}\,, \lambda_{-}\right\}$, with $\lambda_{+} \neq \lambda_{-}$. Therefore, it follows from Theorem~\ref{2x2KuboAndo} that
\begin{equation*}
\A \sharp \B = \alpha_{f}(\X)\A+\beta_{f}(\X)\B,
\end{equation*}
where
\begin{equation*}
\beta_{f}(\X) = \frac{\sqrt{\lambda_{+}}-\sqrt{\lambda_{-}}}{\lambda_{+}-\lambda_{-}}\,, \quad \text{ and } \alpha_{f}(\X) = \frac{\lambda_{+}\sqrt{\lambda_{-}} - \lambda_{-}\sqrt{\lambda_{+}}}{\lambda_{+}-\lambda_{-}}\,.
\end{equation*}
We have
\begin{equation*}
\lambda_{+}-\lambda_{-} = \left(\sqrt{\lambda_{+}}-\sqrt{\lambda_{-}}\right)\left(\sqrt{\lambda_{+}}+\sqrt{\lambda_{-}}\right)\,,
\end{equation*}
so
\begin{equation*}
\beta_{f}(\X) = \frac{1}{\sqrt{\lambda_{+}}+\sqrt{\lambda_{-}}}\,.
\end{equation*}
Similarly,
\begin{equation*}
\alpha_{f}(\X) = \frac{\lambda_{+}\sqrt{\lambda_{-}}-\lambda_{-}\sqrt{\lambda_{+}}}{\lambda_{+}-\lambda_{-}} = \frac{\sqrt{\lambda_{+}\lambda_{-}}\left(\sqrt{\lambda_{+}}-\sqrt{\lambda_{-}}\right)}{\left(\sqrt{\lambda_{+}}-\sqrt{\lambda_{-}}\right)\left(\sqrt{\lambda_{+}}+\sqrt{\lambda_{-}}\right)} = \frac{\sqrt{\lambda_{+}\lambda_{-}}}{\sqrt{\lambda_{+}}+\sqrt{\lambda_{-}}}\,.
\end{equation*}
Therefore
\begin{equation*}
\A\sharp\B = \frac{\B+\sqrt{\lambda_{+}\lambda_{-}}\A}{\sqrt{\lambda_{+}}+\sqrt{\lambda_{-}}}\,.
\end{equation*}
Using that
\begin{equation*}
\lambda_{+}\lambda_{-} = \det(\X)\,, \qquad \lambda_{+} + \lambda_{-} = \tr(\X)\,,
\end{equation*}
i.e.
\begin{equation*}
\left(\sqrt{\lambda_{+}}+\sqrt{\lambda_{-}}\right)^{2} = \tr(\X)+2\sqrt{\det(\X)}\,.
\end{equation*}
Finally
\begin{equation*}
\A\sharp\B = \frac{\B+\sqrt{\det(\X)}\A}{\sqrt{\tr(\X)+2\sqrt{\det(\X)}}} \qquad \left(\X = \A^{-1/2}\B\A^{-1/2}\right)\,.
\end{equation*}
This is precisely the classical Pusz-Woronowicz formula for the geometric mean of two positive definite $2\times 2$ matrices (see \cite{PUSZWORONOWICZ}). An analogue result has been obtained in \cite{CHOIKIMLIM}\,.

\end{example}

\noindent The previous theorem extends immediately to the embedded cones
\begin{equation*}
\widetilde{\mathscr{P}}_{2}(\mathbb{H}) \subset \mathscr{P}_{4}(\mathbb{C})\,, \qquad \widetilde{\mathscr{P}}_{2}(\mathbb{C}) \subset \mathscr{P}_{4}(\mathbb{R})\,.
\end{equation*}

\begin{proposition}

Let $\sigma$ be a Kubo-Ando mean on $\mathscr{P}_{2}(\mathbb{C})$ with representing function $f$, and let $\widetilde{\sigma}$ be the corresponding
mean on $\widetilde{\mathscr{P}}_{4}(\mathbb{R})$. Let $\X\,, \Y \in \widetilde{\mathscr{P}}_{4}(\mathbb{R})$ and let
\begin{equation*}
\T := \X^{-\frac{1}{2}}\Y\X^{-\frac{1}{2}}\,.
\end{equation*}
Assume that $\T$ has two distinct eigenvalues $\lambda_{+}$ and $\lambda_{-}$, each with multiplicity two. Then
\begin{equation*}
\X \widetilde{\sigma} \Y = \alpha_{f}(\T)\X + \beta_{f}(\T)\Y\,,
\end{equation*}
where
\begin{equation*}
\alpha_{f}(\T) = \frac{\lambda_{+}f(\lambda_{-})-\lambda_{-}f(\lambda_{+})}{\lambda_{+}-\lambda_{-}}\,, \qquad \qquad \beta_{f}(\T) = \frac{f(\lambda_{+})-f(\lambda_{-})}{\lambda_{+}-\lambda_{-}}\,.
\end{equation*}
Moreover,
\begin{equation*}
\lambda_{\pm} := \frac{\frac{1}{2}\tr(\T)\pm \sqrt{\left(\frac{1}{2}\tr(\T)\right)^2 -4\det(\T)^{\frac{1}{2}}}}{2}\,.
\end{equation*}

\label{DecompositionMeanAlphaBeta}

\end{proposition}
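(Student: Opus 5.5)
The plan is to pull everything back to $\mathscr{P}_{2}(\mathbb{C})$ via $\Psi_{1}$ and then apply Theorem~\ref{2x2KuboAndo}. Here $\widetilde{\mathscr{P}}_{4}(\mathbb{R})$ in the statement should be read as $\widetilde{\mathscr{P}}_{2}(\mathbb{C})=\Psi_{1}(\mathscr{P}_{2}(\mathbb{C}))\subset\mathscr{P}_{4}(\mathbb{R})$.

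\textbf{Step 1: transport to $\mathbb{C}$.} Write $\X=\Psi_{1}(\A)$ and $\Y=\Psi_{1}(\B)$ with $\A,\B\in\mathscr{P}_{2}(\mathbb{C})$. Applying Lemma~\ref{LemmaFunctionalCalculus} to $x\mapsto x^{\pm 1/2}$ gives $\X^{\pm\frac12}_{\mathbb{R}}=\Psi_{1}(\A^{\pm\frac12}_{\mathbb{C}})$. Since $\Psi_{1}$ is multiplicative, it follows that $\T=\Psi_{1}(\Z)$, where $\Z:=\A^{-\frac12}\B\A^{-\frac12}\in\mathscr{P}_{2}(\mathbb{C})$. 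Lemma~\ref{LemmaFunctionalCalculus} also gives $f(\T)=\Psi_{1}(f(\Z))$.

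\textbf{Step 2: apply the $2\times 2$ result.} By definition of $\widetilde{\sigma}$, or equivalently by Theorem~\ref{OneToOneCorrespondenceComplex}, we have $\X\widetilde{\sigma}\Y=\Psi_{1}(\A\sigma\B)$. Theorem~\ref{2x2KuboAndo} gives $\A\sigma\B=\alpha_{f}(\Z)\A+\beta_{f}(\Z)\B$, with real coefficients. Since $\Psi_{1}$ is $\mathbb{R}$-linear, this yields $\X\widetilde{\sigma}\Y=\alpha_{f}(\Z)\X+\beta_{f}(\Z)\Y$.

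\textbf{Step 3: relate the spectra of $\T$ and $\Z$.} This is the main point. I will show that if $\Spec(\Z)=\{\mu_{1},\mu_{2}\}$, then $\T$ has eigenvalues $\mu_{1},\mu_{1},\mu_{2},\mu_{2}$. Diagonalize $\Z=\U\diag(\mu_{1},\mu_{2})\U^{*}$ with $\U\in\U(2)$. Then $\T=\Psi_{1}(\U)\Psi_{1}(\diag(\mu_{1},\mu_{2}))\Psi_{1}(\U)^{t}$, and $\Psi_{1}(\U)$ is orthogonal because of \eqref{AdjointPsiOne}. The middle factor is $\diag(\mu_{1},\mu_{2},\mu_{1},\mu_{2})$, since $\Psi_{1}$ of a real matrix is block diagonal with two copies. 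Consequently, the hypothesis that $\T$ has two distinct eigenvalues of multiplicity two forces $\{\lambda_{+},\lambda_{-}\}=\{\mu_{1},\mu_{2}\}$ with $\mu_{1}\neq\mu_{2}$. Because the formulas for $\alpha_{f},\beta_{f}$ in the distinct case are symmetric under swapping the two eigenvalues, $\alpha_{f}(\Z)=\alpha_{f}(\T)$ and $\beta_{f}(\Z)=\beta_{f}(\T)$ as displayed in the statement.

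\textbf{Step 4: the trace/determinant formula.} From the same block description, $\tr(\T)=2(\mu_{1}+\mu_{2})=2\tr(\Z)$ and $\det(\T)=(\mu_{1}\mu_{2})^{2}=\det(\Z)^{2}$. Hence $\tr(\Z)=\frac12\tr(\T)$ and $\det(\Z)=\det(\T)^{\frac12}$, using positivity to take the square root. Substituting into the quadratic formula for $\lambda_{\pm}$ from the remark after Theorem~\ref{2x2KuboAndo} gives the stated expression.

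I expect no serious obstacle. The only delicate points are checking that $\Psi_{1}$ sends the unitary diagonalization to an orthogonal one, so that the spectrum is exactly doubled, and justifying the positive square root of $\det(\T)$.
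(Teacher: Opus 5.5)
Your proof is correct and follows essentially the same route as the paper: write $\X=\Psi_{1}(\A)$ and $\Y=\Psi_{1}(\B)$, use Lemma~\ref{LemmaFunctionalCalculus} to get $\T=\Psi_{1}(\Z)$, apply the $2\times 2$ affine decomposition, and use the doubling of the spectrum under $\Psi_{1}$ to obtain $\tr(\T)=2\tr(\Z)$ and $\det(\T)=\det(\Z)^{2}$. The differences are minor: you push $\A\sigma\B=\alpha_{f}(\Z)\A+\beta_{f}(\Z)\B$ through $\Psi_{1}$, whereas the paper pushes $f(\Z)=\alpha_{f}(\Z)\Id_{2}+\beta_{f}(\Z)\Z$ through $\Psi_{1}$ and conjugates by $\X^{\frac{1}{2}}$; and you justify the spectral doubling via the orthogonal matrix $\Psi_{1}(\U)$, which the paper only asserts.
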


\begin{proof}

Since $\X\,, \Y \in \widetilde{\mathscr{P}}_{4}(\mathbb{R})$, there exist $\A\,, \B \in \mathscr{P}_{2}(\mathbb{C})$ such that
\begin{equation*}
\X = \Psi_{1}(\A)\,, \qquad \Y = \Psi_{1}(\B)\,.
\end{equation*}
Therefore, it follows that
\begin{equation*}
\T = \X^{-\frac{1}{2}}\Y\X^{-\frac{1}{2}} = \Psi_{1}\left(\A^{-\frac{1}{2}}\B\A^{-\frac{1}{2}}\right)\,.
\end{equation*}
Let $\Z$ be the element of $\mathscr{P}_{2}(\mathbb{C})$ given by
\begin{equation*}
\Z := \A^{-\frac{1}{2}}\B\A^{-\frac{1}{2}}\,,
\end{equation*}
i.e. $\T = \Psi_{1}(\Z)$. By definition of the corresponding mean $\widetilde{\sigma}$, we have
\begin{equation*}
\X \widetilde{\sigma} \Y = \X^{\frac{1}{2}} f(\T) \X^{\frac{1}{2}}\,.
\end{equation*}
It follows from Lemma \ref{LemmaFunctionalCalculus} that
\begin{equation*}
f(\T) = f(\Psi_{1}(\Z)) = \Psi_{1}(f(\Z))\,.
\end{equation*}
We have $\Spec(\Z) = \left\{\lambda_{+}\,, \lambda_{-}\right\}$. Therefore, Theorem \ref{2x2KuboAndo} implies that
\begin{equation*}
f(\Z)=\alpha_{f}(\Z)\Id_{2} + \beta_{f}(\Z)\Z\,,
\end{equation*}
where $\alpha_{f}(\Z)$ and $\beta_{f}(\Z)$ are given by
\begin{equation*}
\alpha_{f}(\Z) = \frac{\lambda_{+}f(\lambda_{-}) - \lambda_{-}f(\lambda_{+})}{\lambda_{+} - \lambda_{-}}\,, \qquad \beta_{f}(\Z) = \frac{f(\lambda_{+})-f(\lambda_{-})}{\lambda_{+}-\lambda_{-}}\,.
\end{equation*}
Then
\begin{equation*}
f(\T) = f(\Psi_{1}(\Z)) = \Psi_{1}(f(\Z)) = \alpha_{f}(\Z)\Psi_{1}(\Id_{2}) + \beta_{f}(\Z)\Psi_{1}(\Z) = \alpha_{f}(\Z)\Id_{4} + \beta_{f}(\Z)\T\,.
\end{equation*}
i.e.
\begin{eqnarray*}
\X \widetilde{\sigma} \Y & = & \X^{\frac{1}{2}}f(\T)\X^{\frac{1}{2}} = \X^{\frac{1}{2}}\left(\alpha_{f}(\Z)\Id_{4} + \beta_{f}(\Z)\T\right)\X^{\frac{1}{2}} \\
& = & \alpha_{f}(\Z)\X +\beta_{f}(\Z)\X^{\frac{1}{2}}\T\X^{\frac{1}{2}} = \alpha_{f}(\Z)\X\ + \beta_{f}(\Z)\Y\,.
\end{eqnarray*}
It remains to express $\lambda_{\pm}$ in terms of $\tr(\T)$ and $\det(\T)$. Since $\T = \Psi_{1}(\Z)$, each eigenvalue of $\Z$ appears twice as an eigenvalue of $\T$. Hence
\begin{equation*}
\tr(\T) = 2\lambda_{+} + 2\lambda_{-} = 2\left(\lambda_{+} + \lambda_{-}\right)\,, \qquad \det(\T) = \lambda^{2}_{+}\lambda^{2}_{-}\,.
\end{equation*}
Thus
\begin{equation*}
\lambda_{+} + \lambda_{-} = \frac{1}{2}\tr(\T)\,, \qquad \lambda_{+}\lambda_{-} = \det(\T)^{\frac{1}{2}}\,.
\end{equation*}
i.e. $\lambda_{+}$ and $\lambda_{-}$ are the roots of
\begin{equation*}
t^{2} - \frac{1}{2}\tr(\T)t + \det(\T)^{\frac{1}{2}} = 0\,.
\end{equation*}
Finally, using the quadratic formula, we obtain
\begin{equation*}
\lambda_{\pm} = \frac{\frac{1}{2}\tr(\T) \pm \sqrt{\left(\frac{1}{2}\tr(\T)\right)^{2} -4\det(\T)^{\frac{1}{2}}}}{2}\,.
\end{equation*}
This proves the proposition\,.

\end{proof}

\begin{remark}

One can see that the previous proposition does not extend to arbitrary elements of $\mathscr{P}_{4}(\mathbb{R})$, i.e. for all $\X,, \Y \in \mathscr{P}_{4}(\mathbb{R})$, there exist $\alpha\,, \beta \in \mathbb{R}$ such that
\begin{equation*}
\X \sigma \Y = \alpha \X + \beta \Y\,.
\end{equation*}
Indeed, let $\X = \Id_{4}$ and $\Y = \diag(1\,, 2\,, 3\,, 4)$. Using that $\X$ and $\Y$ commute, we get
\begin{equation*}
\T = \X^{-\frac{1}{2}}\Y\X^{-\frac{1}{2}} = \Y = \diag(1\,, 2\,, 3\,, 4)\,,
\end{equation*}
whose eigenvalues are $1\,, 2\,, 3\,,$ and $4$. In particular, $\T \notin \widetilde{\mathscr{P}}_{2}(\mathbb{C})$. Consider now the geometric mean $\sharp$. Then
\begin{equation*}
\X \sharp \Y = \Y^{\frac{1}{2}} = \diag(1\,, \sqrt{2}\,, \sqrt{3}\,, 2)\,.
\end{equation*}
Suppose that there exists $\alpha\,, \beta \in \mathbb{R}$ such that
\begin{equation*}
\Y^{\frac{1}{2}} = \X \sharp \Y = \alpha\X+ \beta \Y\,.
\end{equation*}
Equivalently,
\begin{equation*}
\diag(1,\sqrt{2},\sqrt{3},2) = \diag(\alpha+\beta,\alpha+2\beta,\alpha+3\beta,\alpha+4\beta).
\end{equation*}
Comparing the first and last diagonal entries gives
\begin{equation*}
1=\alpha+\beta\,, \qquad 2 = \alpha+4\beta\,.
\end{equation*}
Hence $\beta = \frac{1}{3}$ and $\alpha = \frac{2}{3}$. However, $2\alpha+\beta = \frac{2}{3}+\frac{2}{3} = \frac{4}{3} \neq \sqrt{2}$\,.

\end{remark}

\section{Further remarks and application results}

Here we recall the celebrated Fan-Hoffman theorem that states that for a matrix $\A \in \Mat_{n}(\mathbb{C})$, there exists a unique minimizer of the distance $\left\{\left\|\A-\X\right\|_{\mathbb{C}}\,, \X=\X^{*}\right\}$ with respect to the Frobenius norm (see Equation \eqref{FrobeniusNormD}). The minimizer represents the nearest Hermitian element to $\A$ and is given by $\frac{1}{2}\left(\A+\A^{*}\right)$, the Hermitian part of $\A$ itself \cite[Theorem~8.4]{HIGHAM}. Similarly, consider the results above, it results natural to ask whether there exists the closest elements that commute with $\K_{4}$ to arbitrary $\A$ and $\B$ in $\mathscr{P}_{4}(\mathbb{R})$.

\noindent More generally, consider an arbitrary element $\A \in \mathscr{P}_{2n}(\mathbb{R})$. By writing it in block form 
\begin{equation*}
\A = \begin{bmatrix} \X & \Y \\ \Y^{t} &\W \end{bmatrix}\,, \qquad \X = \X^{t}\,, \qquad \W = \W^{t}\,,
\end{equation*}
the minimizer that commutes with $\K_{4}$ must be of the form 
\begin{equation*}
\begin{bmatrix} \hat{\X} & \hat{\Y} \\ -\hat{\Y} &\hat{\X} \end{bmatrix} = \begin{bmatrix} \hat{\X}^t & -\hat{\Y}^t \\ \hat{\Y} &\hat{\X}^t\end{bmatrix}\,.
\end{equation*}
One can show that the Frobenius norm given in Equation \eqref{FrobeniusNormD} is equal to 
\begin{equation*}
\left\|\A\right\|^{2}_{\mathbb{R}} = \sum\limits_{i=1}^{2n}\sum\limits_{j=1}^{2n}\left|a_{i,j}\right|^{2}\,,
\end{equation*}
we note that the minimizer would be required to satisfy $\hat{\Y} = \Y$, and the distance between $\A$ and the minimizer would be given by 
\begin{equation*}
\left\|\diag\left(\X-\hat{\X}\,, \W-\hat{\X}\right)\right\|_{\mathbb{R}}\,.
\end{equation*}
In other words, this corresponds to solving the least square problem
\begin{align*}
\hat{\X} & ={\arg \min}_{\Z > 0}\left[\sum\limits_{i=1}^{n}\sum\limits_{j=1}^{n}\left|\X-\Z\right|^{2}_{i,j} + \sum\limits_{i=1}^{n}\sum\limits_{j=1}^{n}\left|\W-\Z\right|^{2}_{i,j}\right] \\ 
& ={\arg \min}_{\Z > 0}\left[\left\|\X-\Z\right\|^{2}_{\mathbb{R}} + \left\|\W-\Z\right\|^{2}_{\mathbb{R}}\right]\,. 
\end{align*}
So, at this point, we expand the Frobenius norms to obtain
\begin{equation*}
\left\|\X-\Z\right\|^{2}_{\mathbb{R}} + \left\|\W-\Z\right\|^2_{\mathbb{R}} = 2\left(\left\|\Z\right\|^{2}_{\mathbb{R}} - \langle\X+\W\,, \Z\rangle\right) + \left\|\X\right\|^{2}_{\mathbb{R}} + \left\|\W\right\|^{2}_{\mathbb{R}}\,.
\end{equation*}
So, it suffices to minimize $\left\|\Z\right\|^{2}_{\mathbb{R}} - \langle\X+\W\,, \Z\rangle$. However, this is equal to 
\begin{equation*}
\left\|\Z-\frac{1}{2}(\X+\W)\right\|^{2}_{\mathbb{R}} - \frac{1}{4}\left\|\X+\W\right\|^{2}_{\mathbb{R}}\,.
\end{equation*}
Therefore, the minimum is achieved when $\Z = \frac{\X+\W}{2}.$ This solution is unique due to the strict convexity of the norm. We summarize this in the following theorem.

\begin{proposition}
 Let  $\A \in\mathscr{P}_{2n}(\mathbb{R})$ be written in block form as
\begin{equation*}
\A=\begin{bmatrix} \X & \Y \\ \Y^{t} & \W \end{bmatrix}\,, \qquad \X = \X^{t}\,, \quad \W = \W^{t}\,.
\end{equation*}
Then, the unique closest element in $\widetilde{\mathscr{P}}_{n}(\mathbb{C})$ with respect to the metric induced by the Frobenius norm is given by 
\begin{equation*}
\begin{bmatrix} \frac{1}{2}(\X+\W) & \Y \\ -\Y & \frac{1}{2}(\X+\W)\end{bmatrix}\,.
\end{equation*}

\end{proposition}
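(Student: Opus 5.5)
The plan is to view $\widetilde{\mathscr{P}}_{n}(\mathbb{C})$ as the intersection of $\mathscr{P}_{2n}(\mathbb{R})$ with a linear subspace, and to compute the Frobenius-orthogonal projection onto that subspace. By Proposition \ref{PropositionCtoR}, $\widetilde{\mathscr{P}}_{n}(\mathbb{C}) = \mathscr{P}_{2n}(\mathbb{R}) \cap \mathcal{L}$, where
\begin{equation*}
\mathcal{L} := \left\{\M \in \mathfrak{p}_{2n}(\mathbb{R})\,, \K_{2n}\M = \M\K_{2n}\right\} = \left\{\begin{bmatrix} \hat{\X} & \hat{\Y} \\ -\hat{\Y} & \hat{\X}\end{bmatrix}\,, \hat{\X}^{t} = \hat{\X}\,, \hat{\Y}^{t} = -\hat{\Y}\right\}\,.
\end{equation*}
The point is that $\mathcal{L}$ is a linear subspace of the Euclidean space $(\mathfrak{p}_{2n}(\mathbb{R})\,, \langle\cdot\,,\cdot\rangle)$, with $\langle \M\,, \N\rangle = \tr(\M^{t}\N)$. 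So I will (i) compute the orthogonal projection $\pi(\A)$ of $\A$ onto $\mathcal{L}$, (ii) show that $\pi(\A)$ already lies in the cone $\mathscr{P}_{2n}(\mathbb{R})$, and (iii) conclude. Indeed, $\pi(\A)$ is the unique minimizer of $\|\A - \M\|_{\mathbb{R}}$ over all of $\mathcal{L}$ (Pythagoras), hence also over the smaller set $\widetilde{\mathscr{P}}_{n}(\mathbb{C})$ once we know it belongs to it.

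For (i), I would use the averaging trick. The map $\M \mapsto \K_{2n}\M\K_{2n}^{t}$ is a linear isometric involution of $\mathfrak{p}_{2n}(\mathbb{R})$, since $\K_{2n}$ is orthogonal. Its fixed-point set is exactly $\mathcal{L}$, so the orthogonal projection is
\begin{equation*}
\pi(\A) = \tfrac{1}{2}\left(\A + \K_{2n}\A\K_{2n}^{t}\right)\,.
\end{equation*}
A direct block computation gives $\K_{2n}\A\K_{2n}^{t} = \begin{bmatrix} \W & -\Y^{t} \\ -\Y & \X\end{bmatrix}$. Hence
\begin{equation*}
\pi(\A) = \begin{bmatrix} \frac{1}{2}(\X+\W) & \frac{1}{2}(\Y-\Y^{t}) \\ -\frac{1}{2}(\Y-\Y^{t}) & \frac{1}{2}(\X+\W)\end{bmatrix}\,.
\end{equation*}
Alternatively, one can redo the least-squares computation preceding the statement block by block. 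The diagonal blocks give $\hat{\X} = \frac{1}{2}(\X+\W)$ exactly as above. The off-diagonal blocks require minimizing $\|\Y - \hat{\Y}\|^{2}_{\mathbb{R}} + \|\Y^{t} + \hat{\Y}\|^{2}_{\mathbb{R}}$ over skew-symmetric $\hat{\Y}$, which yields the skew part $\frac{1}{2}(\Y - \Y^{t})$.

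For (ii), $\A$ is positive definite, and so is $\K_{2n}\A\K_{2n}^{t}$, being a congruence of $\A$. Their average is therefore positive definite. Since $\pi(\A)$ also commutes with $\K_{2n}$, Proposition \ref{PropositionCtoR} gives $\pi(\A) \in \widetilde{\mathscr{P}}_{n}(\mathbb{C})$. This step also disposes of the constraint $\Z > 0$ that the preceding discussion left implicit. Uniqueness then follows from the uniqueness of orthogonal projections, or equivalently from strict convexity of $\|\cdot\|^{2}_{\mathbb{R}}$.

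The main obstacle is reconciling the off-diagonal block with the stated formula. The computation produces $\frac{1}{2}(\Y - \Y^{t})$, and this equals $\Y$ precisely when $\Y$ is skew-symmetric. The stated matrix with off-diagonal blocks $\Y$ and $-\Y$ is symmetric, and so lies in $\widetilde{\mathscr{P}}_{n}(\mathbb{C})$, only under that hypothesis. I would therefore prove the statement in the form above, with $\Y$ replaced by its skew part. I would then remark that it coincides with the displayed formula exactly when $\Y^{t} = -\Y$, as is implicitly assumed in the argument preceding the proposition, which sets $\hat{\Y} = \Y$.
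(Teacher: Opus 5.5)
Your argument is correct. The discrepancy you point out is a genuine error in the statement, not a weakness of your proof.

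Already for $n=1$ and $\A = \begin{pmatrix} 2 & 1 \\ 1 & 2 \end{pmatrix}$, the displayed matrix $\begin{pmatrix} 2 & 1 \\ -1 & 2 \end{pmatrix}$ is not symmetric. Meanwhile $\widetilde{\mathscr{P}}_{1}(\mathbb{C}) = \left\{a\Id_{2}\,, a > 0\right\}$, and the nearest point is $2\Id_{2}$, which is exactly your $\pi(\A)$.

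The paper's argument goes wrong at the sentence ``the minimizer would be required to satisfy $\hat{\Y} = \Y$''. Its own symmetry condition on $\begin{bmatrix} \hat{\X} & \hat{\Y} \\ -\hat{\Y} & \hat{\X} \end{bmatrix}$ forces $\hat{\Y}^{t} = -\hat{\Y}$, so $\hat{\Y} = \Y$ is only possible when $\Y$ is skew-symmetric. The resulting distance $\left\|\diag(\X-\hat{\X}\,, \W-\hat{\X})\right\|_{\mathbb{R}}$ also silently drops the lower-left contribution $\left\|\Y^{t}+\hat{\Y}\right\|_{\mathbb{R}}^{2}$. Finally, the paper only minimizes over the diagonal block $\Z$ and never checks that the full block matrix is positive definite.

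Your route differs in kind, not just in the fix. The paper completes the square block by block. You identify the minimizer as the Frobenius-orthogonal projection $\frac{1}{2}\left(\A + \K_{2n}\A\K_{2n}^{t}\right)$ onto the fixed space of the isometric involution $M \mapsto \K_{2n}M\K_{2n}^{t}$. This buys several things:
\begin{itemize}
\item It treats the diagonal and off-diagonal blocks at once, which is exactly what produces the correct block $\frac{1}{2}(\Y-\Y^{t})$.
\item It gives positive definiteness for free, from congruence invariance and convexity of $\mathscr{P}_{2n}(\mathbb{R})$.
\item It gives uniqueness by Pythagoras on the whole subspace, so the open constraint $\Z>0$ never has to be handled.
\item It transfers directly to $\widetilde{\mathscr{P}}_{m}(\mathbb{H}) \subset \mathscr{P}_{2m}(\mathbb{C})$, using the real-linear isometric involution $M \mapsto \K_{2m}\overline{M}\K_{2m}^{t}$ and Proposition~\ref{PropositionHtoC}.
\end{itemize}
The paper's blockwise computation is more hands-on, but as written it only recovers the diagonal block. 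Your plan is the right one: prove the corrected statement, then remark that it agrees with the displayed formula exactly when $\Y^{t} = -\Y$.
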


\end{document}